\documentclass[11pt]{article}
\usepackage[T1]{fontenc}
\usepackage[utf8]{inputenc}
\usepackage{lmodern}
\usepackage[margin=0.95in]{geometry}
\usepackage{amsmath,amssymb,amsthm,mathtools}
\usepackage{booktabs,array,longtable}
\usepackage{enumitem}
\usepackage{microtype}
\usepackage[hidelinks]{hyperref}
\hypersetup{pdfauthor={Navid Rezazadeh},pdftitle={Covariance-Rank Rounding and a Sharp Integrality Gap for a Matroid Edge-Coverage Moment Lift},pdfsubject={Formal research manuscript for independent mathematical review},pdfkeywords={matroid, coverage, approximation, research draft}}
\usepackage{fancyhdr}
\usepackage{listings}
\setlist{topsep=4pt,itemsep=2pt,parsep=0pt}
\newtheorem{theorem}{Theorem}[section]
\newtheorem{lemma}[theorem]{Lemma}
\newtheorem{proposition}[theorem]{Proposition}
\newtheorem{corollary}[theorem]{Corollary}
\theoremstyle{definition}
\newtheorem{definition}[theorem]{Definition}
\theoremstyle{remark}
\newtheorem{remark}[theorem]{Remark}
\newcommand{\R}{\mathbb R}
\newcommand{\Q}{\mathbb Q}
\newcommand{\E}{\mathbb E}
\newcommand{\one}{\mathbf 1}
\newcommand{\OPT}{\operatorname{OPT}}
\newcommand{\tr}{\operatorname{tr}}
\newcommand{\rank}{\operatorname{rank}}
\newcommand{\conv}{\operatorname{conv}}
\newcommand{\cl}{\operatorname{cl}}
\newcommand{\diag}{\operatorname{diag}}
\newcommand{\cB}{\mathcal B}
\newcommand{\cI}{\mathcal I}
\newcommand{\cR}{\mathcal R}
\newcommand{\ip}[2]{\langle #1,#2\rangle}

\numberwithin{equation}{section}
\title{Covariance-Rank Rounding and a Sharp Integrality Gap\texorpdfstring{\\}{ }for a Matroid Edge-Coverage Moment Lift}
\author{Navid Rezazadeh\\
  {\small UC Irvine}\\
  {\small\href{mailto:nrezazad@uci.edu}{\texttt{nrezazad@uci.edu}}}}
\date{10 October 2026}
\begin{document}
\maketitle
\vspace{-1.3em}
\begin{abstract}
Selecting vertices covers their incident edges, but a matroid can prevent selecting the combinations that cover the most weight. We study whether pair-selection information improves rounding from a semidefinite relaxation of this problem. A feasible point $(x,Y)$ records individual and paired selection values. Its relaxation objective is $U$, and the matrix $Y-xx^\top$ has rank $d$. We compare the feasible marginal vectors obtained by fixing one element as selected or excluded, then round only the best vector. For $d\ge1$, the resulting basis has value at least $(3/4+1/(4d))U$. The procedure is deterministic and polynomial-time for a supplied exact rational point and an independence oracle. Weighting the covariance spectrum by the objective gives a stronger version of the bound. To show the rank factor is sharp, we construct a rank-$2m$ rational linear matroid and $m$ disjoint four-cycles with integral optimum $3m+1$, relaxation optimum $4m$, and an optimal covariance matrix of rank $m$. Thus the relaxation has limiting reciprocal integrality gap $4/3$, despite its conditional constraints. One valid pair-exclusion inequality makes it exact on this family, as an explicit semidefinite certificate shows. The result concerns rounding and the strength of this relaxation; it does not supply a low-rank optimal point or a fixed universal approximation improvement.
\end{abstract}
\noindent\textbf{Keywords:} matroid; maximum vertex coverage; moment relaxation; Lov\'asz--Schrijver lift; covariance rank; deterministic rounding; integrality gap.

\section{Introduction and scope}
A selected vertex covers every edge incident to it. We seek a set whose covered edge weight is largest, subject to that set being independent in a matroid. Write $f(S)$ for its covered weight and $\OPT$ for the maximum feasible value. Because all weights are nonnegative, an optimum can be a basis, that is, a maximal independent set. For this coverage objective, Huang and Sellier~\cite{HS23} record a polynomial-time $3/4$ approximation based on a coverage linear program and matroid rounding. The general monotone submodular threshold is therefore not the relevant baseline.

Our question is about a stronger upper bound: how much of a feasible semidefinite relaxation value can an integral basis recover? The relaxation records a vector $x$ of singleton-selection values and a matrix $Y$ of pair-selection values. Its constraints require feasible marginal vectors after either fixing or excluding any one element. These local requirements do not ensure that all entries come from a single distribution over bases. The challenge is to use the available pair information without making that unjustified assumption.

The central step is to form the two marginal vectors associated with each element and compare their multilinear coverage values. This value is the expected covered weight under independent selection at the given marginals. We choose the vector with the largest value. Standard exchange rounding then produces a basis without decreasing that objective. The proof evaluates an average over these choices; the algorithm itself is deterministic. Correlations in the matrix determine how much this single-coordinate step improves on the $3/4$ baseline.

\subsection{The pointwise guarantee and its limit}
For a feasible lifted point $(x,Y)$, let $U(x,Y)$ be its relaxation objective and let $d$ be the rank of its covariance matrix $Y-xx^\top$. We prove
\begin{equation}\label{eq:intro-bound}
 f(S)\ge\left(\frac34+\frac{1}{4d}\right)U(x,Y),
 \qquad d=\rank(Y-xx^\top)\ge1.
\end{equation}
The rank $d$ describes the supplied matrix, not the matroid. The algorithm is polynomial-time for an exact rational feasible point. A stronger result uses how the objective distributes weight across the covariance directions. Neither statement asserts that a low-rank optimal point can be found efficiently.

The distinction between the relaxation and the optimization problem is essential. Let $U^\star$ be the optimal relaxation value. For every positive integer $m$, we construct an instance satisfying
\begin{equation}\label{eq:intro-gap}
 U^\star=4m,\qquad \OPT=3m+1,\qquad d=m.
\end{equation}
The ratio $(3m+1)/(4m)$ attains the rank-dependent factor and approaches $3/4$. Consequently, no rounding method can guarantee $(3/4+\delta)U^\star$ for a fixed $\delta>0$ on all instances: its requested value would exceed $\OPT$ on this family. This is a limitation of the upper bound, not an approximation-hardness theorem for the underlying problem.

For $m>1$, the construction permits pair statistics that are individually compatible with feasible bases but cannot hold simultaneously. One pair-exclusion inequality makes the relaxation exact on the family, as a semidefinite upper-bound certificate shows.

\subsection{Relation to established methods}
The relaxation is a normalized form of the positive semidefinite matrix lift of Lov\'asz and Schrijver~\cite{LS91}. It retains the pair variables rather than projecting onto $x$; the lift is not new. The final step uses established matroid exchange rounding~\cite{CCPV11,CVZ09}. The complete-bipartite construction and mutually exclusive two-element pairs also occur in a determinant-maximization obstruction~\cite[Appendix F]{MNST20}. Section~\ref{sec:monomials} compares the relaxation with the stronger matroid monomial formulations of Fischer, Fischer, and McCormick~\cite{FFM22}.

Rezazadeh and Kia~\cite{RezazadehKia23} studied distributed monotone submodular maximization under partition-matroid constraints using multilinear extensions and stochastic pipage rounding. Their related work~\cite{RezazadehKia22} examined the tradeoff between approximation performance and privacy in distributed submodular maximization. Both papers concern distributed optimization, rather than the conditional moment lift and integrality gap analyzed here.

The claims requiring a priority comparison are the pointwise covariance/effective-rank bound and its matching example for this conditional lift. We do not claim novelty for the lift, exchange rounding, or the pair-exclusion principle. The results below do not establish historical priority.

Section~\ref{sec:definitions} defines the lift and the quantities the proof uses. Section~\ref{sec:main} develops the conditioning argument. Sections~\ref{sec:gap} and~\ref{sec:cuts} establish sharpness and identify the missing inequality. The numerical qualifications in Section~\ref{sec:implementation} distinguish an exact supplied-point theorem from claims about semidefinite solver output.

\section{Definitions and the lifted problem}\label{sec:definitions}
We first specify the feasible sets and the three objective values used in the paper: the integral value, its multilinear extension, and the lifted value. The lift then supplies the covariance identities needed for conditioning.

\subsection{Matroids and coverage}
Nonnegative coverage permits optimization over bases rather than all independent sets. We define this setting before the relaxation.

A matroid is a pair $M=(V,\cI)$. The family $\cI\subseteq2^V$ satisfies the following properties. The empty set is in $\cI$. Every subset of a member of $\cI$ is in $\cI$. If $A,B\in\cI$ and $|A|<|B|$, some $b\in B\setminus A$ satisfies $A\cup\{b\}\in\cI$.

A basis is an inclusion-maximal independent set. All bases have the same size, denoted by $r$. Write $\cB$ for the family of bases. For $A\subseteq V$, the rank function is
\[
 \rho(A)=\max\{|I|:I\subseteq A,\ I\in\cI\}.
\]
Write $A+v=A\cup\{v\}$ and $A-v=A\setminus\{v\}$. 

For algorithmic access, an independence oracle returns whether a supplied set belongs to $\cI$. For the explicit constructions, a linear matroid is specified by columns over a field. Its independent sets are the linearly independent column subsets.

Let $G=(V,H)$ be a finite undirected graph without loops. Let $w_{uv}\in\Q_{\ge0}$ for each edge $uv\in H$. Each unordered edge occurs once. Write $n=|V|$ and $W=\sum_{uv\in H}w_{uv}$. Put
\begin{equation}\label{eq:coverage}
 f(S)=\sum_{uv\in H}w_{uv}\one[\{u,v\}\cap S\ne\varnothing],
 \qquad
 \OPT=\max_{S\in\cI}f(S).
\end{equation}
The notation $\one[P]$ is one when statement $P$ holds, and zero otherwise.

The function $f$ satisfies $f(\varnothing)=0$ and is monotone. It is also submodular. Indeed, the marginal gain from adding $v$ equals the weight of its incident edges that are not yet covered. This quantity cannot increase when the selected set grows. Every independent set extends to a basis. Monotonicity therefore implies that a basis attains $\OPT$.

For $z\in[0,1]^V$, define
\begin{equation}\label{eq:F}
 F(z)=\sum_{uv\in H}w_{uv}(z_u+z_v-z_uz_v).
\end{equation}
Independent selection with marginal probabilities $z$ has expected value $F(z)$. In particular, $F(\one_S)=f(S)$.

The base polytope and its scaled copies are
\[
 B(M)=\conv\{\one_B:B\in\cB\},\qquad
 aB(M)=\{az:z\in B(M)\}\quad(a\ge0).
\]
Here $\one_B$ is the indicator vector of $B$. We use $0B(M)=\{0\}$. For any vector $z$, write $z(A)=\sum_{v\in A}z_v$.

\begin{table}[t]
\centering
\small
\begin{tabular}{@{}p{0.23\linewidth}p{0.71\linewidth}@{}}
\toprule
Symbol & Definition \\
\midrule
$n$, $r$ & Ground-set size $|V|$ and matroid rank $\rho(V)$ \\
$W$ & Total edge weight $\sum_{uv\in H}w_{uv}$ \\
$Z$ & Augmented moment matrix $\left(\begin{smallmatrix}1&x^\top\\x&Y\end{smallmatrix}\right)$ \\
$C$, $d$ & $C=Y-xx^\top$ and $d=\rank(C)$ \\
$U(x,Y)$, $U^\star$ & Lifted objective value and its maximum \\
$\sigma_v$ & $\sqrt{x_v(1-x_v)}$ \\
$T$ & $\sum_{uv\in H}w_{uv}\sigma_u\sigma_v$ \\
$A$, $D$ & Positive semidefinite matrices defined in \eqref{eq:AD} \\
$\beta$ & $\tr(A^2)/\tr(A)^2$, when $T>0$ \\
$d_{\rm act}$ & Rank of $C[J,J]$ on the active index set in \eqref{eq:active} \\
\bottomrule
\end{tabular}
\caption{Notation reference for the rounding theorem. Later quantities are defined in the text when needed. Matrix rank and matroid rank are different quantities.}
\label{tab:notation}
\end{table}

\subsection{The matrix lift}
The lift enforces feasible conditional marginals without requiring true joint moments. This distinction drives both the rounding algorithm and the gap.

A real symmetric matrix $Q$ is positive semidefinite, written $Q\succeq0$, when $a^\top Qa\ge0$ for every real vector $a$ of the appropriate dimension. The trace $\tr(Q)$ is the sum of its diagonal entries. Matrix rank is the dimension of its column space.

Define $\cR(M)$ as the set of pairs $(x,Y)\in\R^V\times\R^{V\times V}$ that satisfy
\begin{equation*}\label{eq:R}\tag{R}
\begin{aligned}
 x&\in B(M), &Y&=Y^\top,\\
 Y_{vv}&=x_v &&(v\in V),\\
 Z:=\begin{pmatrix}1&x^\top\\x&Y\end{pmatrix}&\succeq0,\\
 Y_{\cdot v}&\in x_vB(M) &&(v\in V),\\
 x-Y_{\cdot v}&\in(1-x_v)B(M) &&(v\in V).
\end{aligned}
\end{equation*}
The column $Y_{\cdot v}$ includes its diagonal entry. In particular, this system uses the full base polytope in every conditional constraint. For $0<x_v<1$, dividing the last two constraints by $x_v$ and $1-x_v$ gives two points of $B(M)$. These are the conditional candidates used for rounding; the zero-scaling convention handles endpoint coordinates.

The lifted objective is
\begin{equation}\label{eq:U}
 U(x,Y)=\sum_{uv\in H}w_{uv}(x_u+x_v-Y_{uv}),
 \qquad
 U^\star=\max_{(x,Y)\in\cR(M)}U(x,Y).
\end{equation} For true selection moments, $x_u+x_v-Y_{uv}$ is the probability that at least one endpoint is selected.

The next proposition establishes the upper bound, attainment of $U^\star$, and the covariance identities needed for conditioning. Here $\one$ denotes the all-ones vector.

\begin{proposition}[Validity, compactness, and covariance identities]\label{prop:lift}
The set $\cR(M)$ is nonempty, convex, and compact. Every distribution on bases gives a feasible pair of first and second moments. Thus $U^\star\ge\OPT$. For every feasible point,
\begin{align}
 C:=Y-xx^\top&\succeq0, & C_{vv}&=x_v(1-x_v),\label{eq:covdiag}\\
 \max(0,x_u+x_v-1)&\le Y_{uv}\le\min(x_u,x_v),\label{eq:frechet}\\
 C\one&=0.\label{eq:rowsum}
\end{align}
If $x_v\in\{0,1\}$, then $C_{uv}=0$ for every $u$. If $n\ge1$, then $\rank(C)\le n-1$.
\end{proposition}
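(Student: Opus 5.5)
I will verify the items in order: the structural properties of $\cR(M)$, validity for distributions over bases, and then the covariance identities, each following from one or two constraints of \eqref{eq:R}.

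\textbf{Nonemptiness, convexity, compactness.} Nonemptiness follows from validity: a point mass on any basis $B$ gives $x=\one_B$, $Y=\one_B\one_B^\top$, which is feasible. Convexity is immediate. Each constraint is convex because $B(M)$ is convex, the PSD cone is convex, and membership $Y_{\cdot v}\in x_vB(M)$ is jointly convex in $(x_v,Y_{\cdot v})$, being the homogenization (conic hull slice) of a polytope. Written explicitly, $B(M)=\{z\ge0: z(A)\le\rho(A),\ z(V)=r\}$, so $Y_{\cdot v}\in x_vB(M)$ is equivalent to the linear conditions $Y_{\cdot v}\ge0$, $Y_{\cdot v}(A)\le x_v\rho(A)$, $Y_{\cdot v}(V)=rx_v$. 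This covers $x_v=0$, where it forces $Y_{\cdot v}=0$. The analogous statement holds for $x-Y_{\cdot v}$. Hence $\cR(M)$ is a closed set cut out by finitely many linear inequalities together with a PSD constraint. Boundedness follows because $x\in[0,1]^V$, and the Fréchet bounds proved below confine $Y$ to $[0,1]^{V\times V}$.

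\textbf{Validity.} Let $\mu$ be a distribution on $\cB$ and set $x=\E[\one_B]$, $Y=\E[\one_B\one_B^\top]$. Then $Z=\E[(1,\one_B)(1,\one_B)^\top]\succeq0$ and $Y_{vv}=x_v$. For the conditional constraints, note that $Y_{\cdot v}=\Pr[v\in B]\,\E[\one_B\mid v\in B]\in x_vB(M)$, and $x-Y_{\cdot v}=\E[\one_B\one[v\notin B]]\in(1-x_v)B(M)$. When a conditioning event has probability zero, the corresponding vector is $0$, which lies in $0B(M)$. By linearity, $U(x,Y)=\E f(B)$. Taking $\mu$ to be the point mass on an optimal basis (which exists by monotonicity) gives $U^\star\ge\OPT$. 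Attainment of $U^\star$ follows from compactness and continuity of $U$.

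\textbf{Covariance identities.} By the Schur complement of the entry $1$ in $Z$, we get $C=Y-xx^\top\succeq0$, and $C_{vv}=x_v-x_v^2$. For \eqref{eq:frechet}, $Y_{uv}\le x_u$ holds because $Y_{\cdot v}\in x_vB(M)$ and points of $B(M)$ lie in $[0,1]^V$, giving $Y_{uv}\le x_v$; symmetry gives the other bound. Next, $Y_{uv}\ge0$ follows from nonnegativity, and $x_u-Y_{uv}\le 1-x_v$ follows from the last constraint.

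For \eqref{eq:rowsum}, coordinates of $B(M)$ sum to $r$, so $(Y\one)_v=Y_{\cdot v}(V)=rx_v$ and $x(V)=r$. Therefore $C\one=Y\one-x\,x(V)=rx-rx=0$.

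If $x_v\in\{0,1\}$, then $C_{vv}=0$, and for a PSD matrix this forces the whole row $C_{v\cdot}$ to vanish: $C_{uv}^2\le C_{uu}C_{vv}=0$.

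Finally, for $n\ge1$ the nonzero vector $\one$ lies in $\ker C$, so $\rank(C)\le n-1$.

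\textbf{Main subtlety.} The only delicate point is treating the scaled-polytope constraints at endpoint coordinates. Handling them through the explicit homogenized inequality description avoids case splits and also delivers convexity and closedness directly.
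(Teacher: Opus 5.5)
Your proof is correct. For validity, attainment of $U^\star$, the Schur-complement step, the Fr\'echet bounds, $C\one=0$, the vanishing rows and the rank bound, it is the same argument as the paper's.

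The one real difference is how you get convexity and closedness of the scaled-column constraints.

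\begin{itemize}
\item The paper introduces the homogenized cone $K=\{(a,z):a\ge0,\ z\in aB(M)\}$. It proves $K$ closed by a sequence argument that uses only compactness of $B(M)$, including the case $a_j\to0$. It then writes each constraint as a linear preimage of $K$.
\item You instead invoke the rank-inequality description of $B(M)$ (Edmonds' theorem, which the paper proves separately as Lemma~\ref{lem:basepoly} in the appendix) and homogenize it by $x_v$. Each constraint then becomes finitely many linear inequalities. This is valid because $x\in B(M)$ already gives $0\le x_v\le 1$, so both scalings $x_v$ and $1-x_v$ are nonnegative. The endpoint cases $x_v=0$ and $x_v=1$ force the corresponding vector to be zero, exactly as you say.
\end{itemize}

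Your route gives an explicit description of $\cR(M)$ as a polyhedron intersected with one linear matrix inequality, with no limit argument. The cost is importing a nontrivial polyhedral theorem. The paper's route is self-contained at this point and uses nothing about $B(M)$ beyond its being a compact convex set.
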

\begin{proof}
Let $B$ be a random basis and put $X=\one_B$. Set $x=\E X$ and $Y=\E[XX^\top]$. Then $x\in B(M)$, $Y_{vv}=x_v$, and
\[
 Z=\E\!\left[\begin{pmatrix}1\\X\end{pmatrix}
                 \begin{pmatrix}1\\X\end{pmatrix}^{\!\top}\right]\succeq0.
\]
When $x_v>0$, the vector $Y_{\cdot v}/x_v$ is the conditional mean of $X$ given $X_v=1$. It is a convex combination of basis indicators and therefore lies in $B(M)$. The complementary conditional mean gives the last constraint. When an event has probability zero, its unnormalized column is zero. This proves feasibility in all cases. Moreover, $U(x,Y)=\E f(B)$.

Every vector in $B(M)$ has coordinates in $[0,1]$. The two scaled-column constraints give
\[
 0\le Y_{uv}\le x_v,
 \qquad
 0\le x_u-Y_{uv}\le1-x_v.
\]
Symmetry gives \eqref{eq:frechet}. These inequalities bound all variables.

The cone $K=\{(a,z):a\ge0,\ z\in aB(M)\}$ is closed and convex. For closedness, take a convergent sequence $(a_j,a_jb_j)$ with $b_j\in B(M)$. Compactness of $B(M)$ gives a convergent subsequence of $b_j$. Its limit verifies membership, including when $a_j$ tends to zero. Convexity follows by taking weighted averages in $B(M)$. 

Each scaled-column constraint is the inverse image of $K$ under a linear map. The other constraints are closed and convex. Thus $\cR(M)$ is closed, convex, and bounded. A fixed basis supplies a feasible point, so it is nonempty. The continuous objective attains a maximum.

The block identity
\[
 Z=
 \begin{pmatrix}1&0\\x&I\end{pmatrix}
 \begin{pmatrix}1&0\\0&C\end{pmatrix}
 \begin{pmatrix}1&x^\top\\0&I\end{pmatrix}
\]
proves $C\succeq0$, because the outer factors are invertible transposes of one another. The diagonal of $C$ gives \eqref{eq:covdiag}. A $2\times2$ principal minor gives
$C_{uv}^2\le C_{uu}C_{vv}$. This proves the zero-row assertion.

Finally, the entries of column $v$ sum to $rx_v$, because $Y_{\cdot v}\in x_vB(M)$. By symmetry, $Y\one=rx$. Since $\one^\top x=r$, we obtain $C\one=0$. For $n\ge1$, the all-ones vector is nonzero, so this kernel vector gives $\rank(C)\le n-1$.
\end{proof}

To identify the established lift, use the cone $K$ and its scale coordinate. Here $\diag(Z)$ is the vector of diagonal entries.

\begin{remark}[The exact Lov\'asz--Schrijver interpretation]\label{rem:LS}
Let $e_0,e_1,\ldots,e_n$ be the standard coordinate vectors in the augmented space. The normalized matrix lift of $K=\{(a,z):a\ge0,\ z\in aB(M)\}$ requires
\[
 Z\succeq0,\quad Z_{00}=1,\quad Ze_0=\diag(Z),\quad
 Ze_v\in K,\quad Z(e_0-e_v)\in K.
\]
These are the positive semidefinite matrix-cone conditions of Lov\'asz and Schrijver~\cite[Section 1]{LS91}. Substitution of $Z$ gives \eqref{eq:R}. Its $x$-projection is $B(M)$: one inclusion is explicit, and the other follows from Proposition~\ref{prop:lift}. We retain $Y$ because the objective depends on second moments. We do not identify this system with a lift of an extended coverage linear program that has extra edge variables.
\end{remark}

\subsection{Which pair moments the relaxation permits}\label{sec:monomials}
The distinction between true moments and the lifted variables explains what the rounding theorem must overcome. 

Fischer, Fischer, and McCormick~\cite{FFM22} study polynomial objectives over
matroid independent sets by linearizing monomials. They describe a strengthened
rank-inequality system and a hierarchy of monomial formulations. In particular,
Corollary~13 of their 2017 author preprint removes the independence requirement
on the underlying variable set. All monomials on that set are retained, with
those supported on dependent sets fixed to zero. This distinction matters for
our family, where the union of all quadratic supports is not independent.

Let $H_2(M)$ denote the convex hull of the singleton and pair indicators of bases:
\[
 H_2(M)=\operatorname{conv}\{(\mathbf1_B,(\mathbf1_B)_u(\mathbf1_B)_v)_{u<v}:B\in\mathcal B\}.
\]
Projecting the full monomial convex hull, restricted to the base equation
$x(V)=r$, onto singleton and quadratic coordinates gives exactly $H_2(M)$.
In these singleton and pair coordinates, the true moment polytope is contained in our relaxation $\mathcal R(M)$:
every distribution over bases satisfies~\eqref{eq:R}. The containment can
be strict, as Theorem~\ref{thm:gap} shows.

Consequently, the exact full-monomial hull restricted to bases implies every
inequality valid for $H_2(M)$, including the pair-exclusion inequality in
Section~\ref{sec:cuts}. This follows from the projection identity above, not
from an assertion that the cited paper states that particular inequality.
No claim about a specified truncated level is needed here. The pair-exclusion
principle is not new.

The candidate contribution is a pointwise bound and matching gap for this weaker conditional PSD system. The monomial formulations inspected do not state the same covariance-rank theorem or example; this comparison does not establish priority.

\section{Rounding a feasible marginal vector}\label{sec:rounding}
Conditioning produces points in $B(M)$. The next guarantee rounds them while preserving $F$; Appendix~\ref{app:constructive-rounding} supplies the constructive proof. A $t$-term decomposition is a convex combination of $t$ basis indicators.

\begin{lemma}[Objective-preserving deterministic rounding]\label{lem:round}
Given $z\in B(M)\cap\Q^V$, a deterministic polynomial-time procedure returns a basis $B$ with $f(B)\ge F(z)$. Given a decomposition with $t$ terms, the merging stage uses at most $(t-1)r$ exchanges and $O(tr^2)$ independence queries.
\end{lemma}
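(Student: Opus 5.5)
Two stages: first decompose $z$ into a convex combination of basis indicators, then merge the bases pairwise by exchanges, each step keeping the multilinear value $F$ from decreasing. The key structural fact is that $F$ is multilinear: for any two coordinates $u\neq v$, the map $\varepsilon\mapsto F(z+\varepsilon(e_u-e_v))$ is a polynomial in $\varepsilon$ of degree at most two. Its $\varepsilon^2$ coefficient is $w_{uv}$ if $uv\in H$ and zero otherwise. This coefficient is nonnegative, so the map is convex and its maximum over any interval is attained at an endpoint. This is exactly the property exploited by pipage and swap rounding~\cite{CCPV11,CVZ09}. I would use it in the deterministic form: at each move, choose the endpoint with the larger $F$-value instead of randomizing.

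For the decomposition stage, I would invoke the standard constructive Carath\'eodory argument for the base polytope. Given rational $z\in B(M)$ and an independence oracle, one can write $z=\sum_{i=1}^t\lambda_i\one_{B_i}$ with $t\le n$ and rational $\lambda_i$ in strongly polynomial time. One route is Cunningham-style minimization of $\rho(A)-z(A)$ combined with the greedy algorithm on tight chains. A simpler route is to repeatedly take a vertex of the minimal face of $B(M)$ containing $z$, found by the greedy algorithm ordered consistently with the tight sets, and move $z$ away from it. I would cite this step rather than reprove it, since membership and decomposition for $B(M)$ are classical.

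For the merging stage, maintain the current point $y=\sum_i\lambda_i\one_{B_i}$, starting with $y=z$, and merge $B_1$ with $B_2$. While $B_1\ne B_2$, pick $u\in B_1\setminus B_2$. The symmetric exchange property gives $v\in B_2\setminus B_1$ with both $B_1-u+v$ and $B_2-v+u$ bases. Finding such a $v$ takes at most $2r$ oracle queries. There are two options:
\begin{itemize}
\item replace $B_1$ by $B_1-u+v$, which moves $y$ by $\lambda_1(e_v-e_u)$;
\item replace $B_2$ by $B_2-v+u$, which moves $y$ by $\lambda_2(e_u-e_v)$.
\end{itemize}
Both new points lie on the segment $\{y+\varepsilon(e_u-e_v)\}$ with $\varepsilon\in[-\lambda_1,\lambda_2]$. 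Since the current point is $\varepsilon=0$, an interior point of this interval, convexity shows that one of the endpoints has $F$-value at least $F(y)$. Choose that one. Each exchange reduces $|B_1\triangle B_2|$ by two, so after at most $r$ exchanges the two bases coincide and can be combined with weight $\lambda_1+\lambda_2$. Repeating this $t-1$ times leaves a single basis $B$ with $f(B)=F(\one_B)\ge F(z)$. The totals are at most $(t-1)r$ exchanges and $O(tr^2)$ queries. Each $F$-evaluation is an exact rational computation whose bit size stays polynomial, because the $\lambda_i$ themselves are never changed.

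The step I expect to be the main obstacle is the decomposition. It needs a deterministic, polynomial-time, exact-rational Carath\'eodory decomposition that uses only an independence oracle. The merging stage is elementary once the degree-two, nonnegative-curvature property along exchange directions is checked. So I would isolate the decomposition as a cited subroutine, noting it can be built from submodular function minimization of $\rho-z$, which yields a bound of $t\le n$. That is why the lemma states the merging cost in terms of a given $t$.
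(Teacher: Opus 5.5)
Your proposal is correct and follows essentially the same route as the paper. You cite a constructive Carath\'eodory decomposition, then merge bases pairwise by symmetric exchanges, keeping whichever endpoint along $e_u-e_v$ has the larger $F$-value; this is justified by convexity from the $\varepsilon^2$ coefficient $w_{uv}\ge0$, and it gives the same $(t-1)r$ exchange and $O(tr^2)$ query bounds with the weights $\lambda_i$ never altered. The only difference is the cited decomposition subroutine (Cunningham-style or submodular-minimization based with $t\le n$, versus the paper's Gr\"otschel--Lov\'asz--Schrijver optimization--separation argument with $t\le n+1$), which does not affect the merging proof.
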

\begin{proof}
Appendix~\ref{app:constructive-rounding} gives the complete decomposition, symmetric-exchange, and rational-arithmetic argument. Along a feasible exchange direction, $F$ is convex, so one endpoint preserves its current value.
\end{proof}

This standard exchange-rounding step~\cite{CCPV11,CVZ09} uses exact evaluations of $F$, not value-oracle sampling. The algorithm calls it once.

\section{The covariance-rank theorem}\label{sec:main}
The goal is a better pointwise ratio when covariance occupies fewer directions. We first state the guarantee, then show how feasible conditional marginals, two edge inequalities, and a spectral average establish it.

\begin{theorem}[Pointwise covariance-rank rounding]\label{thm:main}
Let $(x,Y)\in\cR(M)$, and put $d=\rank(Y-xx^\top)$. If $d=0$, then $x$ is a basis indicator and $f(\{v:x_v=1\})=U(x,Y)$. If $d\ge1$, some basis $S$ satisfies
\begin{equation}\label{eq:main}
 f(S)\ge\left(\frac34+\frac{1}{4d}\right)U(x,Y).
\end{equation}
For exact rational $(x,Y)$ and rational nonnegative edge weights, the basis can be found deterministically in polynomial time. The same procedure satisfies the stronger effective-rank bound in Theorem~\ref{thm:effective}.
\end{theorem}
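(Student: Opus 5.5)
The plan is to compare the conditional marginal vectors with $U$ by an averaging argument over which coordinate is conditioned, and then round the best one with Lemma~\ref{lem:round}.

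\textbf{Case $d=0$.} Here $C=0$, so $C_{vv}=x_v(1-x_v)=0$ for every $v$ and $x\in\{0,1\}^V\cap B(M)$ is a basis indicator. Also $Y=xx^\top$, so $U(x,Y)=F(x)=f(\{v:x_v=1\})$.

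\textbf{Candidates and their average.} For $d\ge1$, let $J=\{k:0<x_k<1\}$, which is nonempty. For each $k\in J$ form the two candidates
\[
z^{k,1}=Y_{\cdot k}/x_k,\qquad z^{k,0}=(x-Y_{\cdot k})/(1-x_k),
\]
both in $B(M)$ by \eqref{eq:R}. Writing $z^{k,1}=x+C_{\cdot k}/x_k$ and $z^{k,0}=x-C_{\cdot k}/(1-x_k)$, a direct computation shows the following. If $k$ is drawn from a distribution $p$ on $J$ and we then take $z^{k,1}$ with probability $x_k$ and $z^{k,0}$ otherwise, the linear terms of $F$ average to those of $x$. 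The product terms pick up $C_{uk}C_{vk}\bigl(1/x_k+1/(1-x_k)\bigr)=C_{uk}C_{vk}/C_{kk}$. Hence
\[
\E F(z)=\sum_{uv\in H}w_{uv}\Bigl(x_u+x_v-x_ux_v-\sum_k p_k\tfrac{C_{uk}C_{vk}}{C_{kk}}\Bigr),
\]
whereas $U=\sum w_{uv}(x_u+x_v-x_ux_v-C_{uv})$.

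\textbf{Gram-vector form.} Factor $C_{uv}=\sigma_u\sigma_v\ip{g_u}{g_v}$ with unit vectors $g_u\in\R^d$ for $u\in J$. The deficit $U-\E F$ becomes
\[
\sum w_{uv}\sigma_u\sigma_v\bigl(g_u^\top P g_v-\ip{g_u}{g_v}\bigr),\qquad P=\sum_k p_k g_kg_k^\top .
\]
The best candidate satisfies $\max F\ge \E F$, and then Lemma~\ref{lem:round} gives a basis $S$ with $f(S)\ge \max F$. This is deterministic and polynomial: there are $2|J|$ rational candidates, each evaluated exactly.

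\textbf{Two edge inequalities.} Two per-edge inequalities will control the deficit.
\begin{itemize}[leftmargin=*]
\item A baseline inequality, $x_u+x_v-x_ux_v\ge\frac34\min(1,x_u+x_v)$, combined with $x_u+x_v-Y_{uv}\le\min(1,x_u+x_v)$ from \eqref{eq:frechet}. This is the $3/4$ slack, which will be measured against $T$.
\item A bound $|C_{uv}|\le\sigma_u\sigma_v$, together with $\sigma_u\sigma_v\le\frac14\min(1,x_u+x_v)$ (the latter is to be checked). This makes $T\le U$-type comparisons possible.
\end{itemize}
I would assemble these into matrices $A$ and $D$ as in \eqref{eq:AD}. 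Concretely, $A$ is the $w$-weighted, $\sigma$-scaled edge matrix pushed into Gram space, $A=\sum_{uv}w_{uv}\sigma_u\sigma_v(g_ug_v^\top+g_vg_u^\top)/2$ or a PSD variant thereof. The choice of $p$ is then to be made proportional to the objective's weight on each $g_k$, so that $\sum_{uv}w\sigma\sigma\, g_u^\top Pg_v$ becomes $\tr(A^2)/\tr(A)$.

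\textbf{Main obstacle.} The hard step is choosing $p$ so that the conditioning gain beats the correlation loss by exactly $U/(4d)$. Specifically, I need to show
\[
\E F-\tfrac34U\ \ge\ \beta\cdot(\text{the }T\text{ slack})\ \ge\ \tfrac1{4d}U .
\]
This should come from Cauchy--Schwarz on the eigenvalues of a PSD matrix $A$ of rank at most $d$: $\tr(A^2)\ge\tr(A)^2/\rank(A)$, i.e.\ $\beta\ge1/d$. That inequality is what produces the effective-rank Theorem~\ref{thm:effective} and then the $d$-bound.

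I would first try taking $p_k\propto\sum_{u}w_{uk}\sigma_u\sigma_k$ (weighted $\sigma$-degree). If that does not make the quadratic form line up with $\tr(A^2)$, I would instead derive $p$ from a spectral decomposition of $A$. The delicate points are the sign of the cross terms and the edges with an integral endpoint, which contribute to $U$ but not to $T$.
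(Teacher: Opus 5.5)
Your setup matches the paper: the $d=0$ case, the two-point conditioning average, its Gram form with $P=\sum_kp_kg_kg_k^\top$, and your first guess $p_k\propto\sum_u w_{uk}\sigma_u\sigma_k$ are what the proof uses. The gap is that the proposal stops at the step that constitutes the proof, and two of the ingredients you name would not work.

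First, the edge inequality $\sigma_u\sigma_v\le\frac14\min(1,x_u+x_v)$ is false. Take $x_u=x_v=\frac1{10}$: the left side is $\frac9{100}$ and the right side is $\frac1{20}$. The second inequality actually needed is $T\le F-\frac12U$. It follows edgewise from $\frac{a+b}2-ab=\frac{a(1-b)+b(1-a)}2\ge\sqrt{a(1-a)b(1-b)}$ together with $Y_{uv}\ge0$.

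Second, the matrix you propose, $\frac12\sum t_{uv}(g_ug_v^\top+g_vg_u^\top)$, is the cross-term matrix $K$. Its trace is $F-U$, and it need not be positive semidefinite; on the gap family it equals $-I_m$. The relevant matrix is the endpoint matrix $A=\frac12\sum t_{uv}(g_ug_u^\top+g_vg_v^\top)\succeq0$, which has $\tr A=T$. Your choice of $p$ gives exactly $P=A/T$.

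With that choice, the averaged loss is $\frac1T\sum t_{uv}g_u^\top Ag_v=\frac1T\tr(AK)$. Write $D=A+K=\frac12\sum t_{uv}(g_u+g_v)(g_u+g_v)^\top\succeq0$, so that $\tr D=T+F-U$. The average is then $Q=F+(\tr(A^2)-\tr(AD))/T$. So the quadratic form does not reduce to $\tr(A^2)/\tr(A)$: the gain $\beta T$ is offset by a further loss $\tr(AD)/T$, and your sketched chain has no way to control it.

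The missing idea is $\tr(AD)\le\lambda_{\max}(A)\tr(D)$, which holds because $D\succeq0$. With $\ell=\lambda_{\max}(A)/T$ it gives $Q\ge(1-\ell)F+\ell U-(\ell-\beta)T$. Since $\ell\ge\beta$, you may substitute $T\le F-\frac12U$. Since $1-2\ell+\beta\ge(1-\ell)^2\ge0$, you may then substitute $F\ge\frac34U$. The $\ell$-terms cancel and leave $(\frac34+\frac\beta4)U$. Only then does Cauchy--Schwarz, $\beta\ge1/\rank(A)\ge1/d$, finish the argument.

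No spectral choice of $p$ is needed. You do, however, need a separate argument when $T=0$ and $d\ge1$, where your $p$ is undefined. There $|C_{uv}|\le\sigma_u\sigma_v$ gives $F=U$, and for every $v$ the two-point average of $F$ equals $F$.
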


\subsection{The algorithm}
Each fractional coordinate supplies two feasible alternatives to $x$. The algorithm compares their actual multilinear values, rather than trying to sample from the possibly unrealizable pair matrix.

For each $v$ with $0<x_v<1$, define
\begin{equation}\label{eq:conditionals}
 a^{(v)}=\frac{Y_{\cdot v}}{x_v},\qquad
 b^{(v)}=\frac{x-Y_{\cdot v}}{1-x_v}.
\end{equation}
These points lie in $B(M)$ by \eqref{eq:R}. In addition, $a_v^{(v)}=1$ and $b_v^{(v)}=0$.

\medskip
\noindent\fbox{\begin{minipage}{0.94\linewidth}
\textbf{Algorithm 1: conditional selection and one rounding call}
\begin{enumerate}[leftmargin=1.7em]
\item Form the candidate set $\mathcal Z=\{x\}\cup\{a^{(v)},b^{(v)}:0<x_v<1\}$.
\item Evaluate $F(z)$ for all $z\in\mathcal Z$. Choose a maximizer $z^\star$.
\item Apply Lemma~\ref{lem:round} to $z^\star$. Return the resulting basis.
\end{enumerate}
\end{minipage}}
\medskip

Algorithm 1 uses at most $2n+1$ candidate evaluations and one rounding call. It never computes a Gram factor, an eigenvalue, or a numerical matrix rank.

\subsection{Two edge inequalities}
The first inequality recovers the $3/4$ baseline. The second controls a weighted variance quantity; that bound will allow the spectral average to improve the baseline.

For this section, abbreviate $F=F(x)$ and $U=U(x,Y)$. Set
\[
 \sigma_v=\sqrt{x_v(1-x_v)},\qquad
 T=\sum_{uv\in H}w_{uv}\sigma_u\sigma_v.
\] Thus $\sigma_v^2=C_{vv}$; $T$ weights products of the marginal standard deviations along coverage edges.

Both estimates are edgewise, so their proof does not require any additional assumptions about the matroid.

\begin{lemma}\label{lem:scalar}
Every feasible point satisfies
\begin{equation}\label{eq:scalar}
 F\ge\frac34U,\qquad T\le F-\frac12U.
\end{equation}
If $T=0$, then $F=U$.
\end{lemma}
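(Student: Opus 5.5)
The plan is to prove both inequalities in \eqref{eq:scalar} edge by edge and then sum with the nonnegative weights $w_{uv}$. Fix an edge $uv\in H$ and write $a=x_u$, $b=x_v$, $y=Y_{uv}$. The edge contributions are
\[
F_e=a+b-ab,\qquad U_e=a+b-y,\qquad T_e=\sigma_u\sigma_v=\sqrt{a(1-a)b(1-b)}.
\]
Note that $F_e-U_e=y-ab=C_{uv}$. The only facts about the lift that I will use are the Fréchet bounds \eqref{eq:frechet} from Proposition~\ref{prop:lift}, together with its statement that a row of $C$ vanishes at an integral coordinate. No matroid structure enters.

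\emph{The $3/4$ bound.} The lower Fréchet bound $y\ge\max(0,a+b-1)$ gives $U_e\le\min(1,a+b)$. It therefore suffices to prove the classical coverage estimate $1-(1-a)(1-b)\ge\frac34\min(1,a+b)$. Put $s=a+b$. For fixed $s$, the product $(1-a)(1-b)$ is at most $(1-s/2)^2$ by AM--GM. Hence
\[
F_e\ge 1-(1-s/2)^2=s-\tfrac{s^2}{4}.
\]
If $s\le1$, this is at least $s-\tfrac{s}{4}=\tfrac34 s$. If $s\ge1$, it is at least $\tfrac34$, since $s-s^2/4$ is increasing on $[0,2]$. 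In both cases $F_e\ge\frac34 U_e$, and summing over edges gives $F\ge\frac34U$.

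\emph{The variance bound.} Expanding,
\[
F_e-\tfrac12U_e=\tfrac{a+b}{2}-ab+\tfrac{y}{2}=\tfrac12\bigl[a(1-b)+b(1-a)\bigr]+\tfrac{y}{2}.
\]
Since $y\ge0$ by \eqref{eq:frechet}, AM--GM gives
\[
F_e-\tfrac12U_e\ge\sqrt{a(1-b)\,b(1-a)}=\sigma_u\sigma_v=T_e.
\]
Summing gives $T\le F-\frac12U$.

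\emph{The case $T=0$.} Every term $w_{uv}\sigma_u\sigma_v$ is nonnegative, so $T=0$ forces, for each edge with $w_{uv}>0$, $\sigma_u=0$ or $\sigma_v=0$. That means one endpoint has $x\in\{0,1\}$. By Proposition~\ref{prop:lift}, $C_{uv}=0$ for such an edge, so $F_e=U_e$. Edges of weight zero contribute nothing, hence $F=U$.

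I expect the only step needing care to be the $3/4$ estimate. The direct route from the second inequality yields only $F\ge\frac12U$, because it uses the lower Fréchet bound $y\ge0$ but not $y\ge a+b-1$. So the first inequality must instead go through $U_e\le\min(1,a+b)$ and the one-variable bound above.
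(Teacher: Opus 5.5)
Your proof is correct and follows the paper's argument essentially step for step. You use $U_e\le\min(1,s)$ together with the bound $s-s^2/4$ for the $3/4$ estimate, AM--GM plus $y\ge0$ for $T\le F-\frac12U$, and the vanishing of $C_{uv}$ on edges with an integral endpoint for the case $T=0$; the only cosmetic difference is that you cite the zero-row assertion of Proposition~\ref{prop:lift}, where the paper invokes the underlying $2\times2$ minor bound $|C_{uv}|\le\sigma_u\sigma_v$ directly.
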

\begin{proof}
For one edge, put $a=x_u$, $b=x_v$, $y=Y_{uv}$, and $s=a+b$. Equation~\eqref{eq:frechet} gives
\[
 a+b-y\le\min(1,s).
\]
Also $ab\le s^2/4$. If $0\le s\le1$, then
$s-s^2/4\ge3s/4$. If $1\le s\le2$, then
$s-s^2/4\ge3/4$. Hence
\[
 a+b-ab\ge\frac34(a+b-y).
\]
Multiply by the nonnegative edge weight and sum.

For the second inequality, the arithmetic--geometric mean inequality gives
\begin{align*}
 \frac{a+b}{2}-ab
 &=\frac{a(1-b)+b(1-a)}{2}\\
 &\ge\sqrt{a(1-a)b(1-b)}.
\end{align*}
The edge contribution to $F-U/2$ is the left side plus $y/2$. It is at least the right side, since $y\ge0$. Again sum over edges.

If $T=0$, each positive-weight edge has $\sigma_u\sigma_v=0$. Positive semidefiniteness gives $|C_{uv}|\le\sigma_u\sigma_v=0$. Since
\begin{equation}\label{eq:FU}
 F-U=\sum_{uv\in H}w_{uv}C_{uv},
\end{equation}
we obtain $F=U$.
\end{proof}

\subsection{Conditional improvement}
We next compute the expected change in $F$ under a random choice between the two conditional vectors for a fixed coordinate. This is an artificial two-point distribution for analysis, not an assumed joint distribution with moments $(x,Y)$.

\begin{lemma}\label{lem:conditioning}
For $v$ with $0<x_v<1$, let $R_v$ equal $a^{(v)}$ with probability $x_v$, and $b^{(v)}$ otherwise. Then
\begin{align}
 \E R_v&=x,\label{eq:condmean}\\
 \E F(R_v)&=F-\frac{1}{\sigma_v^2}
       \sum_{ij\in H}w_{ij}C_{iv}C_{jv}.\label{eq:condF}
\end{align}
\end{lemma}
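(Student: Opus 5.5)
The plan is a direct computation with two ingredients: linearity for the mean, and an exact second-order expansion of $F$ for the value.

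\textbf{The mean \eqref{eq:condmean}.} By \eqref{eq:conditionals},
\[
 \E R_v=x_v\frac{Y_{\cdot v}}{x_v}+(1-x_v)\frac{x-Y_{\cdot v}}{1-x_v}=Y_{\cdot v}+x-Y_{\cdot v}=x .
\]

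\textbf{Deviations from $x$.} Next I express both candidates as deviations from $x$ along the covariance column $C_{\cdot v}$. Using $Y_{\cdot v}=C_{\cdot v}+x_vx$, I get
\[
 a^{(v)}-x=\frac{C_{\cdot v}}{x_v},\qquad
 b^{(v)}-x=\frac{x-C_{\cdot v}-x_vx}{1-x_v}-x=-\frac{C_{\cdot v}}{1-x_v}.
\]
So $R_v=x+\xi\,C_{\cdot v}$, where the scalar $\xi$ equals $1/x_v$ with probability $x_v$ and $-1/(1-x_v)$ otherwise. This gives $\E\xi=0$, consistent with the first step, and
\[
 \E\xi^2=\frac{1}{x_v}+\frac{1}{1-x_v}=\frac{1}{x_v(1-x_v)}=\frac{1}{\sigma_v^2}.
\]

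\textbf{The value \eqref{eq:condF}.} Since $F$ is a quadratic polynomial, its expansion around $x$ is exact. For a direction $h$,
\[
 F(x+h)=F(x)+\nabla F(x)^\top h-\sum_{ij\in H}w_{ij}h_ih_j ,
\]
because each edge term $z_i+z_j-z_iz_j$ has quadratic part $-z_iz_j$. I substitute $h=\xi C_{\cdot v}$ and take expectations. The linear term vanishes because $\E\xi=0$. The quadratic term becomes $-\E\xi^2\sum_{ij}w_{ij}C_{iv}C_{jv}=-\sigma_v^{-2}\sum_{ij}w_{ij}C_{iv}C_{jv}$, which is \eqref{eq:condF}.

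\textbf{Where care is needed.} There is no real obstacle; the only places to be careful are these.
\begin{itemize}[leftmargin=1.5em]
\item The deviation of $b^{(v)}$ must be simplified correctly.
\item The quadratic part counts each unordered edge once, with no factor of $2$.
\item No positive semidefiniteness or feasibility beyond $0<x_v<1$ is needed, so the identity is purely algebraic.
\end{itemize}
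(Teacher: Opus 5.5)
Your proposal is correct and follows essentially the same route as the paper: write $a^{(v)}-x=C_{\cdot v}/x_v$ and $b^{(v)}-x=-C_{\cdot v}/(1-x_v)$, note that the linear part of $F$ keeps its expectation, and use $1/x_v+1/(1-x_v)=1/\sigma_v^2$ to obtain the quadratic loss. Packaging the randomness as the single scalar $\xi$ multiplying the fixed column $C_{\cdot v}$ is only a tidier presentation of the paper's entrywise computation of $\E[(R_{v,i}-x_i)(R_{v,j}-x_j)]$.
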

\begin{proof}
From $Y=xx^\top+C$,
\[
 a^{(v)}-x=\frac{C_{\cdot v}}{x_v},\qquad
 b^{(v)}-x=-\frac{C_{\cdot v}}{1-x_v}.
\]
The weighted deviations cancel, which gives $\E R_v=x$.

For coordinates $i,j$, the expected product of the deviations is
\begin{align*}
 \E[(R_{v,i}-x_i)(R_{v,j}-x_j)]
 &=C_{iv}C_{jv}\left(\frac{1}{x_v}+\frac{1}{1-x_v}\right)\\
 &=\frac{C_{iv}C_{jv}}{\sigma_v^2}.
\end{align*}
The linear terms in $F$ have unchanged expectations. Each negative quadratic term contributes the negative of this covariance, which proves \eqref{eq:condF}.
\end{proof}

\subsection{The spectral average}
The conditioning identity still depends on the chosen coordinate. We average it with objective-dependent weights so that the improvement becomes a matrix trace expression whose size can be bounded by rank.

Assume $T>0$. Let $J$ be the active set
\begin{equation}\label{eq:active}
 J=\{v:\sigma_v>0\text{ and some }u\text{ satisfies }uv\in H,
           \ w_{uv}\sigma_u>0\}.
\end{equation}
Thus $J$ contains the endpoints of edges with $w_{uv}\sigma_u\sigma_v>0$.

To express covariances as inner products, factor the positive semidefinite matrix $C$ as a Gram matrix. For every $v$ with $\sigma_v>0$, take a unit vector $g_v\in\R^d$ such that
\begin{equation}\label{eq:gram}
 C_{uv}=\sigma_u\sigma_v\ip{g_u}{g_v}.
\end{equation}
Such vectors exist because the squared norm of a Gram vector for $v$ is $C_{vv}=\sigma_v^2$.

Set $t_{uv}=w_{uv}\sigma_u\sigma_v$. In sums involving $g_u,g_v$, omit edges with $t_{uv}=0$. Define
\begin{equation}\label{eq:AD}
\begin{aligned}
 A&=\frac12\sum_{uv}t_{uv}(g_ug_u^\top+g_vg_v^\top),\\
 D&=\frac12\sum_{uv}t_{uv}(g_u+g_v)(g_u+g_v)^\top.
\end{aligned}
\end{equation}
These matrices package the endpoint directions and their edgewise interactions for the trace calculation. Both are positive semidefinite. They satisfy
\begin{equation}\label{eq:traces}
 \tr(A)=T,\qquad \tr(D)=T+F-U.
\end{equation}
Indeed, each $g_v$ is a unit vector, and \eqref{eq:FU} gives the second identity.

The incident weights below count every edge at its two endpoints. Their normalization makes the conditional objectives an average that the best candidate must dominate.

\begin{lemma}[Exact spectral identity]\label{lem:spectralidentity}
Define probabilities
\[
 p_v=\frac{\sum_{u:uv\in H}t_{uv}}{2T}\qquad(v\in J).
\]
First choose $v$ with probability $p_v$. Then choose $R_v$ as in Lemma~\ref{lem:conditioning}. The expected objective $Q$ is
\begin{equation}\label{eq:Qidentity}
 Q=F+\frac{\tr(A^2)-\tr(AD)}{T}.
\end{equation}
\end{lemma}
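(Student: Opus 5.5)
The plan is a direct computation: average the conditioning identity \eqref{eq:condF} of Lemma~\ref{lem:conditioning} over $v$ with weights $p_v$, and rewrite every covariance through the Gram representation \eqref{eq:gram}. Before that, I check that $(p_v)_{v\in J}$ is a probability distribution. Every edge with $t_{uv}>0$ has both endpoints in $J$, since $\sigma_u,\sigma_v>0$ and $w_{uv}>0$. Each such edge is counted once at each endpoint, so $\sum_{v\in J}p_v=2T/(2T)=1$. Each $v\in J$ has $0<x_v<1$, so Lemma~\ref{lem:conditioning} applies, and by the law of total expectation
\[
 Q=\sum_{v\in J}p_v\,\E F(R_v)=F-\sum_{v\in J}\frac{p_v}{\sigma_v^2}\sum_{ij\in H}w_{ij}C_{iv}C_{jv}.
\]

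Next I rewrite the inner sum for fixed $v\in J$. If $\sigma_i=0$, then $C_{iv}=0$ by Proposition~\ref{prop:lift}, since $|C_{iv}|\le\sigma_i\sigma_v$. So only edges with $\sigma_i,\sigma_j>0$ contribute, and edges with $w_{ij}=0$ contribute nothing. The surviving edges are exactly those with $t_{ij}>0$, which is the convention used in \eqref{eq:AD}. For them, \eqref{eq:gram} gives $C_{iv}C_{jv}=\sigma_i\sigma_j\sigma_v^2\ip{g_i}{g_v}\ip{g_j}{g_v}$. The factor $\sigma_v^2$ cancels, and
\[
 \frac{1}{\sigma_v^2}\sum_{ij}w_{ij}C_{iv}C_{jv}=g_v^\top N g_v,
 \qquad N=\frac12\sum_{ij}t_{ij}\bigl(g_ig_j^\top+g_jg_i^\top\bigr).
\]
Expanding $(g_u+g_v)(g_u+g_v)^\top$ in \eqref{eq:AD} shows that $D=A+N$.

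It remains to average over $v$. Writing $g_v^\top Ng_v=\tr(Ng_vg_v^\top)$, I need $\sum_{v\in J}p_vg_vg_v^\top$. Regrouping the sum over vertices as a sum over edges gives
\[
 \sum_{v\in J}p_vg_vg_v^\top=\frac1{2T}\sum_{uv}t_{uv}(g_ug_u^\top+g_vg_v^\top)=\frac{A}{T}.
\]
Therefore $Q=F-\tr(NA)/T=F-\bigl(\tr(DA)-\tr(A^2)\bigr)/T$, which is \eqref{eq:Qidentity} because $\tr(DA)=\tr(AD)$.

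The only delicate step is the bookkeeping of degenerate indices. I must make sure that edges with a zero-variance endpoint, or with zero weight, can be dropped from \eqref{eq:condF} before substituting the unit vectors $g_i$, which are defined only when $\sigma_i>0$. I must also make sure that vertices outside $J$ receive zero weight, so that they never need a well-defined $R_v$. Both points follow from the zero-row assertion of Proposition~\ref{prop:lift}. The remaining steps are algebraic identities.
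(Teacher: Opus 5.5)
Your proof is correct and follows the paper's route: you average \eqref{eq:condF} with weights $p_v$, substitute the Gram representation \eqref{eq:gram}, and use $\sum_v p_v g_vg_v^\top=A/T$ together with $D=A+N$, where your $N$ is the paper's $K$. The only differences are cosmetic. You write each per-vertex loss as $g_v^\top N g_v$ before averaging, whereas the paper averages first to obtain $\frac1T\sum_{ij}t_{ij}g_i^\top A g_j$. You also spell out the zero-variance bookkeeping that the paper covers by its convention of omitting edges with $t_{uv}=0$.
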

\begin{proof}
Every edge is counted twice in the incident-weight sum, so $\sum_vp_v=1$. Also,
\[
 \sum_vp_vg_vg_v^\top=\frac{A}{T}.
\]

Using \eqref{eq:gram}, the loss term in \eqref{eq:condF} becomes
\[
 \sum_{ij}t_{ij}\ip{g_i}{g_v}\ip{g_j}{g_v}.
\]
Its average over $v$ is
\[
 \frac1T\sum_{ij}t_{ij}g_i^\top A g_j.
\]
To express this average as a trace, introduce the symmetric cross-term matrix
\[
 K=\frac12\sum_{ij}t_{ij}(g_ig_j^\top+g_jg_i^\top).
\]
Expansion of \eqref{eq:AD} gives $D=A+K$. Since $A$ is symmetric,
\[
 \sum_{ij}t_{ij}g_i^\top Ag_j=\tr(AK)=\tr(AD)-\tr(A^2).
\]
Subtracting this averaged loss from $F$ proves \eqref{eq:Qidentity}.
\end{proof}

The exact average now yields the stronger guarantee. The ratio of two traces measures concentration across the objective-relevant covariance directions; ordinary rank provides a lower bound on that concentration. Here $C[J,J]$ denotes the principal submatrix indexed by the active set $J$.

\begin{theorem}[Effective-rank bound]\label{thm:effective}
Suppose $T>0$, and define
\[
 \beta=\frac{\tr(A^2)}{T^2},\qquad
 d_{\rm act}=\rank(C[J,J]).
\]
Algorithm 1 returns a basis satisfying
\begin{equation}\label{eq:effective}
 f(S)\ge\left(\frac34+\frac\beta4\right)U
 \ge\left(\frac34+\frac{1}{4d_{\rm act}}\right)U
 \ge\left(\frac34+\frac{1}{4d}\right)U.
\end{equation}
The value $\beta$ does not depend on the chosen Gram factorization. The quantity $1/\beta$ is the effective rank of $A$ in this theorem.
\end{theorem}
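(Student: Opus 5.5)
The plan is to chain three comparisons: the rounding guarantee, the averaging identity of Lemma~\ref{lem:spectralidentity}, and a trace bound. Combining the resulting lower bound on $Q$ with the scalar inequalities of Lemma~\ref{lem:scalar} should give $(3/4+\beta/4)U$. The rank comparisons then come from Cauchy--Schwarz on the spectrum of $A$.

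\emph{Step 1 (reduction to $Q$).} Every $R_v$ lies in $\mathcal Z$, and $x\in\mathcal Z$. So $F(z^\star)\ge\max(F,Q)$, since an average cannot exceed the maximum. Lemma~\ref{lem:round} then gives $f(S)\ge F(z^\star)\ge\max(F,Q)$.

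\emph{Step 2 (trace bound).} Both $A$ and $D$ are positive semidefinite, so $\tr(AD)\le\lambda_{\max}(A)\tr(D)\le\tr(A)\tr(D)$. By \eqref{eq:traces} the right side equals $T(T+F-U)$. Inserting this into \eqref{eq:Qidentity} gives
\[
 Q\ge F+\beta T-(T+F-U)=U-(1-\beta)T .
\]
Applying $T\le F-U/2$ from Lemma~\ref{lem:scalar}, and using $\beta\le1$ (which holds because $\tr(A^2)\le\tr(A)^2$ for $A\succeq0$), we obtain $Q\ge U-(1-\beta)(F-U/2)$.

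\emph{Step 3 (case analysis on $F$).} This is the step I expect to be the crux. A naive convex combination of the two bounds $f(S)\ge F$ and $f(S)\ge U-(1-\beta)(F-U/2)$ is too weak, so the constraint $F\ge\frac34U$ from Lemma~\ref{lem:scalar} must also be used. Consider $\phi(F)=\max\bigl(F,\;U-(1-\beta)(F-U/2)\bigr)$. Its first argument is increasing in $F$ and its second is nonincreasing.
\begin{itemize}
\item At $F=\tfrac34U$ the second argument equals $U-(1-\beta)U/4=\tfrac{3+\beta}{4}U\ge\tfrac34U$. So the two arguments cross at some $F\ge\tfrac34U$.
\item For $F$ up to that crossing, $\phi(F)$ equals the nonincreasing second argument, so on $[\tfrac34U,\text{crossing}]$ it is at least its value at the crossing. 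For $F$ beyond the crossing, $\phi(F)=F$ is increasing. Either way, $\phi$ is minimized at the crossing, where both arguments agree.
\item At the crossing, $F\ge\tfrac34U$ and the second argument is at least its value at $F=\tfrac34U$ only if the crossing lies at or below that point; since it lies at or above, I instead use that the crossing value $F_c$ satisfies $F_c\ge$ the second argument evaluated at any larger $F$, and compare directly: the crossing solves $F_c=U-(1-\beta)(F_c-U/2)$, i.e.\ $F_c=\tfrac{(3-\beta)U}{2(2-\beta)}$. One checks $F_c\le\tfrac34U$ exactly when $\beta^2\le\beta$, which holds since $0\le\beta\le1$. Hence the crossing lies at or below $\tfrac34U$, and on the feasible range $F\ge\tfrac34U$ the minimum of $\phi$ is attained at $F=\tfrac34U$.
\end{itemize}
Therefore $f(S)\ge\phi(\tfrac34U)=\tfrac{3+\beta}{4}U$, which is the first inequality in \eqref{eq:effective}.

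\emph{Step 4 (rank comparisons and invariance).} By Cauchy--Schwarz on the eigenvalues of $A$, $\tr(A^2)\ge\tr(A)^2/\rank(A)$, so $\beta\ge1/\rank(A)$. The matrix $A$ is a nonnegative combination of the matrices $g_vg_v^\top$ with $v\in J$. Its range therefore lies in $\mathrm{span}\{g_v:v\in J\}$. The Gram matrix of the scaled vectors $\sigma_vg_v$ ($v\in J$) is $C[J,J]$, and all $\sigma_v>0$ on $J$, so this span has dimension $d_{\rm act}$. Hence $\rank(A)\le d_{\rm act}$. Since $C[J,J]$ is a principal submatrix of $C$, $d_{\rm act}\le d$, giving the remaining two inequalities.

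For invariance, any two Gram factorizations of $C$ restricted to $\{v:\sigma_v>0\}$ differ by an isometry between the spans of their vectors. Such an isometry sends $A$ to a unitarily conjugate matrix, possibly padded with zeros. This leaves $\tr(A)$ and $\tr(A^2)$ unchanged, so $\beta$ is independent of the factorization.
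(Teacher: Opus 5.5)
Step~3 contains an error that hides a genuine loss, so the proposal does not prove the first inequality in \eqref{eq:effective}.

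\textbf{Where Step~3 fails.} The crossing $F_c=\frac{(3-\beta)U}{2(2-\beta)}$ satisfies $F_c\le\frac34U$ if and only if $4(3-\beta)\le 6(2-\beta)$, i.e.\ $\beta\le0$. The condition $\beta^2\le\beta$ is instead equivalent to $F_c\le\frac{3+\beta}{4}U$, which is the wrong direction for you. Your own first bullet already places the crossing at or above $\frac34U$. So on $F\ge\frac34U$ the minimum of $\phi$ is $\phi(F_c)=F_c$, and your inequalities yield only $f(S)\ge\frac{3-\beta}{2(2-\beta)}U$.

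This falls short of $\frac{3+\beta}{4}U$ by $\frac{\beta(1-\beta)}{4(2-\beta)}U>0$ whenever $0<\beta<1$. At $\beta=\frac12$ you get $\frac56U$, not $\frac78U$. Substituting $\beta\ge1/d$ gives $\frac{3d-1}{2(2d-1)}U$, which is below $(\frac34+\frac1{4d})U$ for every $d\ge2$.

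No sharper case analysis can repair this. The values $F=F_c$ and $T=F_c-U/2$ satisfy every inequality you retain ($F\ge\frac34U$, $0\le T\le F-U/2$, $Q\ge U-(1-\beta)T$). They also make both of your lower bounds equal to $F_c$.

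\textbf{The missing idea.} The loss occurs in Step~2, at $\lambda_{\max}(A)\le\tr(A)$. Keep $\ell=\lambda_{\max}(A)/T$ instead. Since the eigenvalues of $A/T$ are nonnegative and sum to one, $\ell^2\le\beta\le\ell\le1$. Then $\tr(AD)\le\ell T\tr(D)$, \eqref{eq:traces}, and $T\le F-U/2$ give
\[
 Q\ge(1-\ell)F+\ell U-(\ell-\beta)T\ge(1-2\ell+\beta)F+\frac{3\ell-\beta}{2}U.
\]
The coefficient of $F$ now satisfies $1-2\ell+\beta\ge(1-\ell)^2\ge0$. So $F\ge\frac34U$ can be inserted directly, giving exactly $(\frac34+\frac\beta4)U$, with no need to compare against $F$ itself. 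Setting $\ell=1$ turns that coefficient into $\beta-1\le0$, which is why your bound became decreasing in $F$.

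Steps~1 and~4 are fine as written. Only $\rank(A)\le d_{\rm act}$ is needed there, and your isometry argument for the invariance of $\beta$ is valid.
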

\begin{proof}
Let $\ell=\lambda_{\max}(A)/T$. The eigenvalues of $A/T$ are nonnegative and sum to one. If they are $\lambda_1,\ldots,\lambda_d$, then
\[
 \beta=\sum_i\lambda_i^2,
 \qquad
 \ell^2\le\beta\le\ell\le1.
\]

To control the cross term, use $D\succeq0$ and $\ell TI-A\succeq0$: 
\[
 \tr(AD)\le\ell T\tr(D).
\]
To justify this trace inequality, factor $D=D^{1/2}D^{1/2}$. The matrix
$D^{1/2}(\ell TI-A)D^{1/2}$ is positive semidefinite and has nonnegative trace.

Equations~\eqref{eq:Qidentity} and \eqref{eq:traces} yield
\begin{align}
 Q&\ge(1-\ell)F+\ell U-(\ell-\beta)T\nonumber\\
  &\ge(1-2\ell+\beta)F+\frac{3\ell-\beta}{2}U.\label{eq:Qstep}
\end{align}
The second step uses $T\le F-U/2$ and $\ell-\beta\ge0$. Furthermore,
\[
 1-2\ell+\beta\ge(1-\ell)^2\ge0.
\]
We can therefore use $F\ge3U/4$ in \eqref{eq:Qstep}. This gives
\[
 Q\ge\left[\frac34(1-2\ell+\beta)+\frac{3\ell-\beta}{2}\right]U
   =\left(\frac34+\frac\beta4\right)U.
\]
Algorithm 1 chooses a point with objective at least this average. Its final rounding does not decrease the objective.

For the rank refinements, write
$A=\sum_{v\in J}a_vg_vg_v^\top$, where
$a_v=\frac12\sum_u t_{uv}>0$.
Because every $a_v$ is positive, the range of $A$ is the span of the $g_v$ for $v\in J$. The positive diagonal scaling in \eqref{eq:gram} shows that its rank is $d_{\rm act}$. Cauchy--Schwarz applied to its nonzero normalized eigenvalues gives
\[
 1\le d_{\rm act}\sum_i\lambda_i^2=d_{\rm act}\beta.
\]
Also $d_{\rm act}\le d$.

Finally,
\[
 \tr(A^2)=\sum_{u,v\in J}a_ua_v\ip{g_u}{g_v}^{\!2}.
\]
All inner products are fixed by $C$ through \eqref{eq:gram}. Thus $\beta$ is independent of the Gram representation.
\end{proof}

\begin{proof}[Completion of Theorem~\ref{thm:main}]
If $d=0$, then $C=0$. Equation~\eqref{eq:covdiag} makes every $x_v$ zero or one. A zero-one point in $B(M)$ is a basis indicator: in any convex decomposition, every basis of positive weight must agree in each coordinate with $x$. Since $Y=xx^\top$, the two objectives agree.

If $d\ge1$ and $T=0$, Lemma~\ref{lem:scalar} gives $F=U$. Round $x$ or use Algorithm 1. The target factor is at most one. If $T>0$, Theorem~\ref{thm:effective} applies.

For rational input, all candidates in \eqref{eq:conditionals} are rational with polynomial encoding length. Their objectives can be compared exactly. Candidate construction uses $O(n^2)$ arithmetic operations, and direct objective evaluation uses $O(n|H|)$. Lemmas~\ref{lem:decomp} and \ref{lem:round} then give polynomial running time. After decomposition into $t\le n+1$ bases, the final merging stage uses $O(nr^2)$ independence queries. The square roots and spectral quantities occur only in the proof.

For real input, finite convex decompositions and the same finite exchange argument prove existence. No computational claim about arbitrary real encodings is needed.
\end{proof}

The covariance constraint $C\one=0$ also gives a bound in terms of the number of vertices.

\begin{corollary}[Finite-size consequence]\label{cor:finite}
For $n\ge2$ and any feasible point with nonzero covariance,
\[
 f(S)\ge\left(\frac34+\frac{1}{4(n-1)}\right)U(x,Y).
\]
This improvement tends to zero with $n$. It does not imply a fixed universal gain over $3/4$.
\end{corollary}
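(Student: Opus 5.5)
The corollary follows by combining Theorem~\ref{thm:main} with the rank bound in Proposition~\ref{prop:lift}. No new analysis is needed.

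Let $(x,Y)\in\cR(M)$ be feasible with $C=Y-xx^\top\neq0$, and put $d=\rank(C)$. Because $C$ is nonzero, $d\ge1$, so we are in the main case of Theorem~\ref{thm:main}. That theorem supplies a basis $S$, namely the output of Algorithm 1, with
\[
 f(S)\ge\left(\frac34+\frac1{4d}\right)U(x,Y).
\]

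Next we bound $d$ from above. By \eqref{eq:rowsum}, $C\one=0$. Since $n\ge1$, the all-ones vector is a nonzero kernel vector, and Proposition~\ref{prop:lift} records the resulting bound $d\le n-1$. The hypothesis $n\ge2$ makes $n-1\ge1$, so $1/(4(n-1))$ is well defined. In fact $d\ge1$ already forces $n\ge2$.

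The map $d\mapsto 3/4+1/(4d)$ is decreasing on positive integers. Since $1\le d\le n-1$, we get $3/4+1/(4d)\ge 3/4+1/(4(n-1))$. This monotonicity step requires $U(x,Y)\ge0$. That holds because every term $x_u+x_v-Y_{uv}$ in \eqref{eq:U} satisfies $x_u+x_v-Y_{uv}\ge\max(x_u,x_v)\ge0$ by \eqref{eq:frechet}, and the weights are nonnegative. Chaining the two inequalities gives the stated bound for the same basis $S$.

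The closing remarks are immediate. The gain $1/(4(n-1))$ tends to zero as $n\to\infty$, so the corollary yields no fixed $\delta>0$ improvement over $3/4$ uniformly in $n$. This is consistent with the gap family announced in \eqref{eq:intro-gap}. The only point needing care is the positivity of $U$ used in the monotonicity step, which is handled above.
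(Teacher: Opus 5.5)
Your proposal is correct and follows the paper's own proof, which simply combines Theorem~\ref{thm:main} with the bound $d\le n-1$ from Proposition~\ref{prop:lift}. Your explicit check that $U(x,Y)\ge0$, needed for the monotonicity step, is a detail the paper leaves implicit.
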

\begin{proof}
Use $d\le n-1$ from Proposition~\ref{prop:lift}.
\end{proof}

\section{A matching family of rational linear matroids}\label{sec:gap}
To determine whether the rank factor can be improved for this lift, we construct a feasible point whose promised rounded value already equals the integral optimum. The construction separates the matroid dependencies from the coverage edges.

\begin{theorem}[Sharpness at every positive covariance rank]\label{thm:gap}
For every integer $m\ge1$, there is a rationally represented rank-$2m$ matroid on $4m$ elements and a unit-weight graph of $m$ disjoint four-cycles such that
\[
 \OPT=3m+1,\qquad U^\star=4m.
\]
There is an optimal feasible point with covariance rank $m$ and $\beta=1/m$. Thus both the covariance-rank factor and the effective-rank factor are attained.
\end{theorem}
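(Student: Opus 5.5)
The plan is to build the instance explicitly, prove $\OPT\le 3m+1$ by a linear-algebra counting argument, and then exhibit a feasible lifted point of value $4m$ with the stated covariance structure.

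\textbf{Construction.} For $i=1,\dots,m$, the $i$-th four-cycle has vertices $u_{i1},v_{i1},u_{i2},v_{i2}$ and unit-weight edges $u_{i1}v_{i1},\,v_{i1}u_{i2},\,u_{i2}v_{i2},\,v_{i2}u_{i1}$. Call $P_i=\{u_{i1},u_{i2}\}$ and $Q_i=\{v_{i1},v_{i2}\}$ the two \emph{sides}; each side covers all four edges of its cycle. Work in $\Q^{2m}$ with standard vectors $e_1,\dots,e_{2m}$, and put $c=\sum_k e_k$. Represent the elements by
\[
u_{i1}=e_{2i-1},\quad u_{i2}=e_{2i-1}+c,\quad v_{i1}=e_{2i},\quad v_{i2}=e_{2i}+c .
\]
The whole set has rank $2m$. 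Any side, and any three elements of one cycle, span $c$.

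\textbf{Upper bound $\OPT\le 3m+1$.} Call cycle $i$ \emph{rich} in an independent set $S$ if $S$ covers all four of its edges. One element covers at most two edges, and two elements cover four only if they form a side. Hence a rich cycle contains either a side or at least three elements of $S$, and in both cases $\operatorname{span}(S\cap\text{cycle }i)\ni c$. Two disjoint subsets of an independent set cannot both span the same nonzero vector, so at most one cycle is rich.

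Let $k_i=|S\cap\text{cycle }i|$, so $\sum_i k_i\le 2m$. A nonrich cycle covers at most $\min(3,2k_i)\le \tfrac32 k_i$ edges. A rich cycle has $k_i\ge2$ and covers $4$ edges. Summing:
\begin{itemize}
\item with one rich cycle, $f(S)\le 4+\tfrac32(2m-2)=3m+1$;
\item with no rich cycle, $f(S)\le 3m$.
\end{itemize}

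\textbf{Lower bound $\OPT\ge 3m+1$.} The set $P_1\cup\{u_{j1},v_{j1}:j\ge2\}$ attains $3m+1$. It spans $c$ and $e_k$ for all $k\ne2$, and $e_2=c-\sum_{k\ne2}e_k$, so it is a basis.

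\textbf{Relaxation point.} Take $x=\tfrac12\one$. Let $Y$ be the second-moment matrix of the product over cycles of the uniform choice between $P_i$ and $Q_i$. Explicitly:
\begin{itemize}
\item $Y_{vv}=\tfrac12$;
\item $Y=\tfrac12$ within a side;
\item $Y=0$ between opposite sides of the same cycle;
\item $Y=\tfrac14$ across different cycles.
\end{itemize}
This is a genuine moment matrix of a distribution on sets, though not on bases. Hence $Z\succeq0$, and every edge has $x_u+x_v-Y_{uv}=1$, so $U=4m$. Since $U\le W=4m$ on $\cR(M)$ by \eqref{eq:frechet}, this gives $U^\star=4m$.

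The covariance $C$ is block diagonal. Block $i$ equals $\tfrac14 s_is_i^\top$ with $s_i=(1,1,-1,-1)$ in the order $(u_{i1},u_{i2},v_{i1},v_{i2})$. So $\rank C=m$, and also $C\one=0$. The Gram vectors are $g_v=\pm e_i$ for $v$ in cycle $i$. Every $\sigma_v=\tfrac12$, and every edge has weight $t_{uv}=\tfrac14$. This gives $A=\tfrac{T}{m}I_m$ with $T=m$, hence $\beta=1/m$. Then $(\tfrac34+\tfrac1{4m})4m=3m+1=\OPT$, so both factors in Theorem~\ref{thm:effective} are attained.

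\textbf{Main obstacle: conditional constraints of \eqref{eq:R}.} Here $x\in B(M)$ and the conditional columns must lie in scaled base polytopes; this is the step needing care. By symmetry it suffices to treat $v=u_{i1}$. Then $Y_{\cdot v}/x_v$ is $1$ on $P_i$, $0$ on $Q_i$, and $\tfrac12$ elsewhere. The complementary vector is the same with $P_i$ and $Q_i$ swapped. I would write each as an average of two explicit bases:
\[
P_i\cup\{u_{j1},v_{j1}:j\ne i\}\quad\text{and}\quad P_i\cup\{u_{j2},v_{j2}:j\ne i\}.
\]
Independence of these sets is checked as in the lower bound. For the second set, subtracting $c$ twice per cycle is absorbed because $P_i$ already spans $c$, so the span again contains all $e_k$. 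Likewise $x$ is the average of these two bases over the choice of $P_i$ or $Q_i$ (taking $i=1$).

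If either explicit basis family failed, the fallback is to verify membership directly through the rank inequalities $z(A)\le\rho(A)$. The key case is $A$ containing two sides, where $\rho(A)$ drops by one.
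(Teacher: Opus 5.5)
Your instance is the paper's construction with the elements relabeled: your $P_i,Q_i$ are its $P_{2i-1},P_{2i}$, and your $c$ is its $h$. Your lifted point is exactly the witness \eqref{eq:witness}. The feasibility, value, rank and $\beta=1/m$ computations are correct, and so are your two-basis decompositions of the conditional vectors.

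The one real flaw is in the upper bound. The inequality $\min(3,2k_i)\le\tfrac32k_i$ is false at $k_i=1$, where a single element covers $2>\tfrac32$ edges. This case does occur in bases: a complete side in one cycle and an empty side in another leaves a cycle with $k_i=1$. A per-element rate of $\tfrac32$ cannot hold in general, since single elements placed in distinct cycles cover $2$ edges each. What actually limits the coverage is the number of cycles, which your estimate never uses.

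The repair is immediate and shortens the argument. Once at most one cycle is rich, each of the remaining $m-1$ cycles covers at most $3$ edges by definition. Hence $f(S)\le 4+3(m-1)=3m+1$. You can drop the $k_i$ bookkeeping entirely, or replace it by $\min(3,2k)\le k+1$ summed over those $m-1$ cycles.

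With that fix your proof is complete. It differs from the paper's mainly in how the upper bound is obtained.
\begin{itemize}
\item The paper first proves the full independence characterization (Lemma~\ref{lem:family-ind}: at most $2m$ elements and at most one complete pair). It does this via nonsingularity of $I+ht^\top$ and an explicit spanning argument, then classifies bases into three types and evaluates each in Lemma~\ref{lem:opt}.
\item You use only the easy necessary condition that two disjoint subsets of an independent set cannot both span $c\neq0$, plus direct spanning checks on the few explicit bases you need.
\item You certify each conditional vector with two bases, where the paper uses the uniform product over one element per remaining pair.
\item You obtain $Z\succeq0$ from the moment matrix of a distribution on (dependent) sets, where the paper uses the block factorization.
\end{itemize}
Your route is lighter for this theorem. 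The paper's heavier characterization pays off by giving the complete basis structure and the rank formula \eqref{eq:rankformula}, from which the optimum and every one-per-pair decomposition are read off at once.
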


\subsection{The represented matroid}
The matroid will permit selecting both elements of one pair, but not both elements of two pairs. An explicit rational representation proves that these dependencies belong to a genuine matroid.

Put $k=2m$. Work in $\Q^k$, with standard basis $e_1,\ldots,e_k$. Let
$h=\sum_{j=1}^{k}e_j$. The elements occur in pairs
\[
 P_j=\{a_j,b_j\},\qquad j=1,\ldots,k,
\]
and their representing columns are
\begin{equation}\label{eq:representation}
 a_j=e_j,\qquad b_j=e_j+h.
\end{equation}
The rank is $k$, because all $a_j$ form a basis. A pair is \emph{complete} in a set when both its elements are selected.

We need all independent sets, not merely one displayed dependence, to compute the optimum and certify every conditional vector. The rank formula provides that complete description.

\begin{lemma}[Full independence characterization]\label{lem:family-ind}
A subset $S$ of these columns is independent if and only if $|S|\le k$ and $S$ contains at most one complete pair. Its rank is
\begin{equation}\label{eq:rankformula}
 \rho(S)=\min\{k,\ t(S)+\one[q(S)>0]\},
\end{equation}
where $t(S)$ counts the pairs that meet $S$ and $q(S)$ counts its complete pairs.
\end{lemma}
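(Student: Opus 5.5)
We prove the rank formula \eqref{eq:rankformula} directly, since the independence characterization follows from it. A set $S$ is independent exactly when $\rho(S)=|S|$. Write $t=t(S)$ and $q=q(S)$, so that $|S|=t+q$. If $q=0$, then $\rho(S)=|S|$ says $\min\{k,t\}=t$, which holds because $t\le k$. If $q\ge1$, then $\rho(S)=|S|$ says $\min\{k,t+1\}=t+q$. This forces $q=1$ and $t+1\le k$, that is, at most one complete pair and $|S|\le k$. When $q=0$ we also have $|S|=t\le k$ automatically. So everything reduces to computing $\dim\operatorname{span}(S)$.

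For the span computation, let $T$ be the set of indices $j$ with $P_j\cap S\ne\varnothing$, and pick one representative from each such pair.

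\emph{Lower bound: the representatives are independent.} Each representative equals $e_j$ or $e_j+h$. Suppose $\sum_{j\in T}c_j(e_j+\epsilon_j h)=0$ with $\epsilon_j\in\{0,1\}$. The coordinate $i$ of this sum is $c_i\one[i\in T]+s$, where $s=\sum_j c_j\epsilon_j$. If $T\ne[k]$, a coordinate outside $T$ gives $s=0$, and then every $c_j$ vanishes. If $T=[k]$, every $c_i=-s$. Substituting gives $s=-s\,|\{j:\epsilon_j=1\}|$, so $s=0$ or $s(1+N)=0$ with $N\ge0$; in either case $s=0$ and all $c_j=0$. Hence $\rho(S)\ge t$.

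\emph{Upper bound and the extra dimension.} If $q\ge1$, some complete pair lies in $S$, and $b_j-a_j=h$. Therefore $\operatorname{span}(S)=\operatorname{span}(\{e_j:j\in T\}\cup\{h\})$, which has dimension $t+1$ if $T\ne[k]$ and dimension $k$ if $T=[k]$. If $q=0$, $S$ is exactly the set of representatives, of size $t$. Together these give $\min\{k,t+\one[q>0]\}$.

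The only delicate point is the case $T=[k]$ with mixed representatives, where $h$ already lies in the span of the $e_j$ and the $+1$ collapses. The $\min$ with $k$ handles this, and the case needs care in the independence argument above. Everything else is routine linear algebra over $\Q$.
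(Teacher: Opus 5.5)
Your proof is correct, but it runs in the opposite direction from the paper's. You compute $\dim\operatorname{span}(S)$ for every $S$ at once, and then read off the independence characterization from $\rho(S)=|S|$ with $|S|=t+q$. The span computation has two parts. Any partial transversal is independent by your coordinate argument. Once a complete pair supplies $h=b_j-a_j$, you have $\operatorname{span}(S)=\operatorname{span}(\{e_j:j\in T\}\cup\{h\})$.

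The paper instead proves the characterization first and derives the rank formula from it, in four steps:
\begin{itemize}
\item It exhibits the explicit dependence $b_j-a_j-b_\ell+a_\ell=0$ for two complete pairs.
\item It shows that full transversals (via the matrix $I+ht^\top$) are bases, and that sets with one complete pair and one empty pair are bases.
\item It shows every other admissible set is independent by extending it to one of these bases.
\item It obtains $\rho(S)$ as the size of the largest admissible subset of $S$.
\end{itemize}

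Your route is more uniform and self-contained. One linear-algebra computation handles partial sets directly, so you need neither the extension step nor the separate basis constructions. Your $T=[k]$ case is the same computation as the paper's $I+ht^\top$ argument, written coordinatewise.

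The paper's route has the advantage of displaying the four-element circuits and the two basis types explicitly. These are exactly what Lemma~\ref{lem:opt}, Remark~\ref{rem:graphic}, and the pair-exclusion cut in Theorem~\ref{thm:repair} later use. In your version they are consequences of the rank formula rather than visible ingredients of the proof.
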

\begin{proof}
Any set of more than $k$ columns is dependent. If it contains two complete pairs $P_j,P_\ell$, then
\[
 b_j-a_j-b_\ell+a_\ell=0
\]
is a nontrivial dependence.

A set with one selected element from every pair is a basis. Its column matrix is $I+ht^\top$ for a vector $t\in\{0,1\}^k$, where $t_j=1$ precisely when $b_j$ is selected. To prove nonsingularity directly, suppose $(I+ht^\top)z=0$. Then $z=-h(t^\top z)$, so
$(1+t^\top h)(t^\top z)=0$. The scalar $1+t^\top h$ is positive. Therefore $z=0$.

Next take one complete pair $P_j$, omit a different pair $P_\ell$, and choose one element from every other pair. Subtract the columns of $P_j$ to obtain $h$. Subtract $h$ from any other selected $b_i$. The resulting span contains $h$ and every $e_i$ except $e_\ell$. These $k$ vectors are independent, since the $\ell$th coordinate of $h$ equals one. Thus the selected set is a basis.

A set with no complete pair extends to a basis with one element from each pair. A set with one complete pair and size at most $k$ has an empty pair. Leave one such pair empty and fill the others with one element. This extends the set to a basis with one complete pair and one empty pair. The characterization follows.

An independent subset of $S$ takes at most one element from each occupied pair, with at most one extra element in total. This gives the upper bound in \eqref{eq:rankformula}. Selecting one element from each occupied pair and, when available, one extra element attains that bound after truncation to size $k$.
\end{proof}

The dependence pattern also comes from a graphic construction. Its representing graph must be distinguished from the coverage graph.

\begin{remark}[Graphic interpretation]\label{rem:graphic}
Consider a separate graph $J_k=K_{2,k}$. Let its two hubs be $p,q$, and its other vertices be $u_1,\ldots,u_k$. Associate $P_j$ with edges $pu_j,qu_j$. An edge set is a forest exactly when it contains at most one complete pair. A cycle requires two complete pairs, and two such pairs form a four-cycle. Restricting these forests to at most $k$ edges gives precisely Lemma~\ref{lem:family-ind}. Thus our matroid is the rank-$k$ truncation of the graphic matroid of $J_k$. This does not assert that the truncated matroid is graphic. The graph $J_k$ is not the coverage graph defined next.
\end{remark}

\subsection{The coverage graph and its integral optimum}
The objective now rewards selecting a complete pair in each four-cycle. Since the matroid permits at most one complete pair overall, the basis classification determines the exact loss.

For each $i=1,\ldots,m$, put all four unit-weight edges between $P_{2i-1}$ and $P_{2i}$. There are no other edges. Write the vertex set of component $i$ as
\[
 V_i=P_{2i-1}\cup P_{2i}.
\]
Each component is $K_{2,2}$.

The following count establishes the integral benchmark against which the lifted witness will be compared.

\begin{lemma}\label{lem:opt}
The maximum feasible coverage value is $3m+1$.
\end{lemma}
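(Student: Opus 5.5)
Since $f$ is monotone, the maximum is attained by a basis, so I will compute $f$ on each basis type given by Lemma~\ref{lem:family-ind}. Because the $m$ components $V_1,\dots,V_m$ share no edges, the coverage value splits as $f(S)=\sum_i f_i(S\cap V_i)$, where $f_i$ counts the covered edges of the $i$th four-cycle. The first step is therefore a local table for a single $K_{2,2}$ with sides $P_{2i-1}$ and $P_{2i}$. Selecting nothing covers $0$ edges. One vertex covers $2$. Two vertices on the same side cover all $4$. Two vertices on opposite sides cover $3$, since only the edge joining the two unselected vertices is missed. Three or four vertices cover $4$. Every four-cycle has only $4$ edges, so this already gives $\OPT\le 4m$ before the matroid constraint is used.

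\textbf{Upper bound.} A basis has exactly $k=2m$ elements and contains at most one complete pair. Call a component \emph{full} if its selection covers all $4$ edges. From the table, a full component either contains a complete pair (a whole side of the $K_{2,2}$), or contains at least $3$ selected vertices, which again forces a complete pair. Hence each full component contains a complete pair, and at most one component is full. Now fix a basis $S$ and let $s_i=|S\cap V_i|$, so $\sum_i s_i=2m$. I will bound $f_i$ in terms of $s_i$: $f_i\le 2s_i$ when $s_i\le1$, and $f_i\le 3$ when $s_i\ge2$ and the component is not full. I expect this counting to be the main care point.

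To finish the counting, I will split according to whether some component is full. If no component is full, then $f_i\le \min(2s_i,3)$ for every $i$. Since $\min(2s,3)\le \tfrac32 s$ for $s\ge2$ and $2s$ exceeds $\tfrac32 s$ only at $s=1$, a direct bound is awkward. A cleaner route is to note $\min(2s,3)\le s+1$ for all $s\ge0$, which gives
\[
 f(S)\le \sum_i (s_i+1)=2m+m=3m.
\]
If exactly one component $i_0$ is full, it contributes $4$ and uses $s_{i_0}\ge2$ elements. The other components use the remaining $2m-s_{i_0}$ elements, contain no complete pair, and have $f_i\le \min(2s_i,3)\le s_i+1$. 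This gives
\[
 f(S)\le 4+(2m-s_{i_0})+(m-1)\le 4+2m-2+m-1=3m+1.
\]

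\textbf{Lower bound.} Construct a basis achieving $3m+1$. Take both elements of $P_1$, leave $P_2$ empty, and select one element from each of $P_3,\dots,P_{2m}$. This set has size $2+(2m-2)=2m$ and exactly one complete pair, so it is a basis by Lemma~\ref{lem:family-ind}. Component $1$ covers all $4$ edges via the side $P_1$. Every other component has one vertex on each side and covers $3$ edges. The total is $4+3(m-1)=3m+1$. Together with the upper bound, this proves $\OPT=3m+1$. The only delicate step is verifying the inequality $\min(2s,3)\le s+1$ (checked at $s=0,1,2$ and for $s\ge3$), together with the claim that a non-full component never exceeds $3$, which comes directly from the local table.
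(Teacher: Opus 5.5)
Your proof is correct, but it bounds the value by a different mechanism than the paper. The paper applies pigeonhole to Lemma~\ref{lem:family-ind}: a basis puts $2m$ elements into $2m$ pairs with at most one complete pair. So either every pair contributes exactly one element, or exactly one pair is complete and exactly one is empty. The paper then evaluates the three resulting basis types exactly with $4-(2-a)(2-b)$, getting $3m$, then $3m+1$ (complete and empty pair in the same component), then $3m$.

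You never classify bases. Instead you prove the per-component charging bound $f_i\le s_i+1$ on components without a complete pair, give the single full component the value $4$, and sum against $\sum_i s_i=2m$.

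The paper's route gives more information: the exact value of every basis and a precise description of the optimal ones. Your route uses less structure. It needs only $|S|\le 2m$ and at most one complete pair, so it works verbatim with $\sum_i s_i\le 2m$ for every independent set. That removes the reduction to bases and the fact that a complete pair in a basis forces an empty one.

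Your argument also compresses to one inequality, $f_i\le s_i+1+q_i$, where $q_i$ is the number of complete pairs in $V_i$. Summing gives $f(S)\le |S|+m+q(S)\le 3m+1$. This is a cruder, linear relative of the certificate \eqref{eq:dualcertificate} in Theorem~\ref{thm:repair}, which at a basis indicator reads exactly $f(S)=3m+q(S)-\tfrac12\sum_i(s_i-2)^2$.
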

\begin{proof}
In a component, selecting $a$ vertices on one side and $b$ on the other covers
\[
 4-(2-a)(2-b)
\]
edges. It suffices to examine bases. A basis has $k$ elements. If it has no complete pair, it selects one element from each pair and covers three edges in each component.

If a basis has one complete pair, it has exactly one empty pair. If these two pairs belong to the same component, that component contributes four edges. Every other component contributes three. Such a basis exists by Lemma~\ref{lem:family-ind}, and its value is $3m+1$.

If the two pairs belong to different components, the complete-pair component contributes four edges and the empty-pair component contributes two. The other components contribute three. The total is $3m$. There are no other types of bases. Hence the optimum is $3m+1$.
\end{proof}

\subsection{A feasible witness for the full conditional lift}
Assign one covariance direction to each four-cycle. We verify the full conditional constraints, not only the matrix’s positive semidefiniteness and rank.

Set $x_v=1/2$ for every element. For each $i$, let $q_i\in\R^{4m}$ have value $+1$ on $P_{2i-1}$, value $-1$ on $P_{2i}$, and value zero elsewhere. Define
\begin{equation}\label{eq:witness}
 C=\frac14\sum_{i=1}^{m}q_iq_i^\top,
 \qquad Y=xx^\top+C.
\end{equation}

We will certify the singleton vector and each conditional vector separately through explicit distributions over bases.

\begin{lemma}\label{lem:witness}
The pair in \eqref{eq:witness} is in $\cR(M)$. It has $\rank(C)=m$ and $U(x,Y)=4m$.
\end{lemma}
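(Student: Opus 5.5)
The plan is to verify the conditions in \eqref{eq:R} one at a time, working directly from the explicit form \eqref{eq:witness}, and then to read off the rank and the objective value. Throughout I use that the vectors $q_i$ have pairwise disjoint supports $V_i$ and entries in $\{0,\pm1\}$. Consequently $C_{uv}$ is nonzero only when $u,v$ lie in the same component $V_i$. In that case $C_{uv}=1/4$ if $u,v$ lie on the same side of the component (including $u=v$), and $C_{uv}=-1/4$ if they lie on opposite sides. Since $Y=xx^\top+C$ with $x=\tfrac12\one$, every entry of $Y$ is $1/2$ for $u,v$ on the same side of one component, $0$ for opposite sides of one component, and $1/4$ for different components.

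The easy conditions come first. $Y$ is symmetric, and $Y_{vv}=1/4+1/4=1/2=x_v$. Positive semidefiniteness of $Z$ follows from $C\succeq0$, a sum of outer products, via the block factorization used in the proof of Proposition~\ref{prop:lift}. For $x\in B(M)$, I take the uniform distribution over sets that pick one element from each pair $P_j$. These are bases by Lemma~\ref{lem:family-ind}, and each element appears with probability $1/2$. For the rank, the $q_i$ are nonzero with disjoint supports, hence orthogonal and linearly independent, so $\rank(C)=m$. For the objective, every coverage edge joins $P_{2i-1}$ to $P_{2i}$, so $Y_{uv}=0$ on it. Each edge then contributes $x_u+x_v-Y_{uv}=1$, giving $U=4\cdot m$.

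The only step needing care is the pair of conditional constraints $Y_{\cdot v}\in x_vB(M)$ and $x-Y_{\cdot v}\in(1-x_v)B(M)$. Since $x_v=1/2$, these amount to $a^{(v)}=2Y_{\cdot v}\in B(M)$ and $b^{(v)}=2(x-Y_{\cdot v})\in B(M)$. Suppose $v$ is in $P_{2i-1}$. Then $a^{(v)}$ equals $1$ on $P_{2i-1}$, $0$ on $P_{2i}$, and $1/2$ on every other pair. I realize it as the uniform mixture of sets that contain $P_{2i-1}$ completely, omit $P_{2i}$, and pick one element from each remaining pair. Each such set has size $k$ and exactly one complete pair, so it is a basis by Lemma~\ref{lem:family-ind}. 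Similarly, $b^{(v)}$ equals $0$ on $P_{2i-1}$, $1$ on $P_{2i}$, and $1/2$ elsewhere. It is the same construction with the roles of $P_{2i-1}$ and $P_{2i}$ exchanged. The case $v\in P_{2i}$ is symmetric. This conditional check is the main obstacle, since it is where the matroid structure enters. It goes through because the complete pair and the empty pair produced by conditioning always lie in the same pair-block, which Lemma~\ref{lem:family-ind} allows.
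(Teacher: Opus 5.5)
Your proposal is correct and follows essentially the same route as the paper. Both compute $C$ and $Y$ entrywise from the disjoint supports of the $q_i$, get $Z\succeq0$ from the block factorization, and certify $x$ and every conditional point by explicit uniform mixtures of bases justified by Lemma~\ref{lem:family-ind}; the rank and objective computations are also the same. One small correction: your closing gloss misplaces the reason. Lemma~\ref{lem:family-ind} accepts any size-$k$ set with exactly one complete pair, wherever that pair and the empty pair sit. Their lying in the same component is what produces the computed marginals, not what makes the sets independent.
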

\begin{proof}
The vectors $q_i$ have disjoint supports and satisfy $q_i^\top q_i=4$. They are linearly independent. Thus $C\succeq0$ has rank $m$. Each diagonal entry of $C$ is $1/4$, so $Y_{vv}=1/2=x_v$. The block identity in Proposition~\ref{prop:lift} verifies $Z\succeq0$.

The point $x$ lies in $B(M)$. Choose one element from every pair, independently and with equal probabilities. Each outcome is a basis, and its mean is $x$.

Fix a vertex $v$ in component $i$. Its conditional point $Y_{\cdot v}/x_v$ with $v$ selected has coordinates $1$ on the pair containing $v$, $0$ on the other pair of $V_i$, and $1/2$ outside $V_i$. To realize these marginals, select the indicated complete pair, omit its partner in $V_i$, and choose one element from every remaining pair with equal probabilities. Every outcome is a basis by Lemma~\ref{lem:family-ind}.

The conditional point $(x-Y_{\cdot v})/(1-x_v)$ with $v$ excluded is realized by the same construction with the two sides of $V_i$ exchanged. This verifies all scaled-column constraints with actual convex combinations of bases.

Each coverage edge joins opposite sides of one component. Equation~\eqref{eq:witness} gives $C_{uv}=-1/4$ and $Y_{uv}=0$ on every such edge. Each edge contributes one to $U$, and there are $4m$ edges.
\end{proof}

\begin{proof}[Proof of Theorem~\ref{thm:gap}]
Lemma~\ref{lem:opt} gives the integral optimum. By \eqref{eq:frechet}, every edge contributes at most one to the lifted objective. Lemma~\ref{lem:witness} attains this bound, so $U^\star=4m$.

For the effective-rank calculation, use the standard basis of $\R^m$ and assign $g_v=e_i$ on $P_{2i-1}$ and $g_v=-e_i$ on $P_{2i}$. Here $\sigma_v=1/2$ and $t_{uv}=1/4$ on every edge. Equation~\eqref{eq:AD} gives
\[
 A=I_m,\qquad D=0,\qquad T=m,\qquad\beta=\frac1m.
\]

The witness has $F(x)=3m$. Both conditional points for each coordinate have objective $3m+1$: its fixed component contributes four, and the other components contribute three each. Algorithm 1 therefore reaches the integral optimum. Moreover,
\[
 \frac{\OPT}{U^\star}=\frac{3m+1}{4m}
       =\frac34+\frac{1}{4m}.
\]
This proves the claimed equalities and sharpness.
\end{proof}

Taking larger members of the family determines the worst-case ratio of the entire lift. The limiting statement is an infimum or supremum, not an assertion that a finite instance attains $3/4$.

\begin{corollary}[Exact unrestricted integrality gap]\label{cor:gap}
Over all graph--matroid instances with $\OPT>0$,
\[
 \inf\frac{\OPT}{U^\star}=\frac34,
 \qquad
 \sup\frac{U^\star}{\OPT}=\frac43.
\]
For every fixed $\delta>0$, some instance satisfies
$(3/4+\delta)U^\star>\OPT$.
\end{corollary}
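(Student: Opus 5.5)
The plan is to combine the lower bound of Lemma~\ref{lem:scalar} with the family from Theorem~\ref{thm:gap}. For the infimum, every instance with $\OPT>0$ satisfies $\OPT\ge\frac34U^\star$. To see this, take an optimal point $(x,Y)\in\cR(M)$; it exists because Proposition~\ref{prop:lift} makes $\cR(M)$ compact. Lemma~\ref{lem:scalar} gives $F(x)\ge\frac34U^\star$. Lemma~\ref{lem:round} applied to $x$ then yields a basis $B$ with $f(B)\ge F(x)$, so $\OPT\ge\frac34U^\star$. Equivalently, Theorem~\ref{thm:main} already gives a basis of value at least $(3/4+1/(4d))U^\star\ge\frac34U^\star$ when $d\ge1$, and exactly $U^\star$ when $d=0$. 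Because $U^\star\ge\OPT>0$, the ratio $\OPT/U^\star$ is well defined and at least $3/4$, so the infimum is at least $3/4$.

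For the matching upper bound on the infimum, Theorem~\ref{thm:gap} gives, for each $m\ge1$, an instance with $\OPT/U^\star=\frac34+\frac1{4m}$, and $\OPT=3m+1>0$. Letting $m\to\infty$ shows the infimum is exactly $3/4$. Taking reciprocals, which is a decreasing bijection on positive reals, gives $\sup U^\star/\OPT=4/3$. Since both sides are positive, this step needs no care beyond noting that the infimum is over a positive set bounded away from zero.

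For the final claim, fix $\delta>0$ and choose $m$ with $\frac1{4m}<\delta$. Then
\[
(3/4+\delta)U^\star=(3/4+\delta)4m=3m+4m\delta>3m+1=\OPT.
\]
The only step needing attention is the lower bound. There I must make sure the rounding guarantee is applied at an optimal point, so that it compares with $U^\star$ rather than with an arbitrary $U(x,Y)$, and that the edge case $d=0$ is covered. Both are immediate from attainment in Proposition~\ref{prop:lift} and the case split in Theorem~\ref{thm:main}.
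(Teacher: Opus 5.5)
Your proposal is correct and matches the paper's proof. It uses Lemma~\ref{lem:scalar} plus exchange rounding at an attained optimum for $\OPT\ge\frac34U^\star$, the family of Theorem~\ref{thm:gap} for the matching limit, and $m>1/(4\delta)$ for the last claim. The one thing to tighten is that Lemma~\ref{lem:round} is stated only for rational $z$, while the optimizer of $\cR(M)$ need not be rational; rest the lower bound on the existence form of the finite exchange argument (your fallback via Theorem~\ref{thm:main}, whose existence claim covers real feasible points), which is exactly the remark the paper makes.
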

\begin{proof}
For any real optimal lifted point, Lemma~\ref{lem:scalar} and the finite exchange argument give a basis of value at least $3U^\star/4$. This argument does not require a rational optimizer. Theorem~\ref{thm:gap} approaches equality as $m$ tends to infinity. For the last statement, take $m>1/(4\delta)$.
\end{proof}

\section{Pair-exclusion cuts and an exact repair}\label{sec:cuts}
The lift permits pair statistics that cannot hold simultaneously in a feasible set. We first express the missing compatibility condition as a valid inequality, then prove that this single inequality closes the gap on the constructed family.

\begin{definition}[Pair-exclusion family]
A collection of distinct two-element sets $P_1,\ldots,P_t$ is a pair-exclusion family when $P_i\cup P_j$ is dependent for all $i\ne j$. Write $Y_P=Y_{ab}$ when $P=\{a,b\}$.
\end{definition}

Pairwise exclusion means that at most one pair-selection event can occur in an independent set. Taking expectations gives the linear constraint on true pair moments.

\begin{proposition}[Validity of the clique inequality]\label{prop:cut}
Every actual distribution over independent sets satisfies
\begin{equation}\label{eq:cliquecut}
 \sum_{i=1}^{t}Y_{P_i}\le1
\end{equation}
for a pair-exclusion family. Given such a family, its pairwise exclusion property can be checked with at most $\binom t2$ independence queries.
\end{proposition}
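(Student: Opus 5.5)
The plan is to prove the inequality pointwise for each independent set and then take expectations. Fix an actual distribution over independent sets, let $S$ be a random independent set, put $X=\one_S$, and set $Y=\E[XX^\top]$. For a two-element set $P=\{a,b\}$ the off-diagonal entry satisfies $Y_P=Y_{ab}=\E[X_aX_b]=\Pr[P\subseteq S]$. The left side of \eqref{eq:cliquecut} is therefore
\[
 \sum_{i=1}^t Y_{P_i}=\E\Big[\sum_{i=1}^t\one[P_i\subseteq S]\Big],
\]
by linearity of expectation, so it suffices to show that every independent set $S$ contains at most one $P_i$.

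The pointwise claim follows from the subset property of matroids. Suppose $P_i\subseteq S$ and $P_j\subseteq S$ with $i\ne j$. Then $P_i\cup P_j\subseteq S$. Since $S\in\cI$ and $\cI$ is closed under subsets, $P_i\cup P_j$ would be independent. This contradicts the pair-exclusion hypothesis. Hence the indicator sum is $0$ or $1$ for every outcome, and its expectation is at most one. The argument uses no structure beyond downward closure, so it applies both to distributions over bases and to distributions over general independent sets. In particular it applies to the moments $(x,Y)$ arising in Proposition~\ref{prop:lift}, with $Y_{ab}$ read as the off-diagonal second moment.

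For the query count, I will verify the hypothesis directly from the definition. For each unordered pair $\{i,j\}$ with $i\ne j$, one oracle call on $P_i\cup P_j$ decides whether that union is dependent. There are $\binom t2$ such pairs, and the family is pair-exclusion exactly when every call answers ``dependent.'' Distinctness of the $P_i$ is given, so it needs no queries. Note that $P_i\cup P_j$ may have three or four elements, which does not matter for a single oracle call.

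I expect no real obstacle here. The only point needing care is the identification $Y_{P}=\Pr[P\subseteq S]$, which holds because $Y$ records the true second moments of a $0/1$ vector. This is precisely why \eqref{eq:cliquecut} is valid for actual distributions but need not hold in $\cR(M)$. Indeed, on the witness of Lemma~\ref{lem:witness} each $Y_{P_j}=1/2$, so the sum over the $k$ pairs $P_j$ equals $m$, which exceeds one for $m>1$.
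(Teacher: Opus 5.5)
Your proposal is correct and matches the paper's proof: downward closure of $\cI$ means an independent set contains at most one $P_i$, so the indicator sum is at most one for every outcome, and taking expectations gives \eqref{eq:cliquecut}. The query bound likewise comes from testing each of the $\binom t2$ unions $P_i\cup P_j$ once.
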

\begin{proof}
An independent set cannot contain both $P_i$ and $P_j$, because its subset $P_i\cup P_j$ would then be independent. Thus at most one of the indicators $\one[P_i\subseteq S]$ is one. Take expectations. The query bound follows by testing each union.
\end{proof}

Proposition~\ref{prop:cut} proves this inequality directly. It is also valid for the exact monomial hull through the projection in Section~\ref{sec:monomials}; the citation to Fischer, Fischer, and McCormick~\cite{FFM22} concerns their monomial formulation, not an explicit statement of this inequality. Here the clique inequality separates the constructed witness from true moments. We do not give a polynomial-time method for finding a violated inequality among all such families.

For the constructed matroid, the union of any two complete pairs is dependent. The resulting inequality not only removes the displayed witness: the following certificate bounds every feasible point of the strengthened system.

\begin{theorem}[One cut makes the example family exact]\label{thm:repair}
For the instance in Theorem~\ref{thm:gap}, add the inequality
\begin{equation}\label{eq:familycut}
 \sum_{j=1}^{2m}Y_{a_jb_j}\le1
\end{equation}
to \eqref{eq:R}. The strengthened lifted optimum equals $3m+1$. For $m>1$, the witness in \eqref{eq:witness} violates this inequality by $m-1$.
\end{theorem}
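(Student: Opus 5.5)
The plan is to prove the two inequalities $U\ge 3m+1$ and $U\le 3m+1$ for the strengthened system, and then compute the violation.

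\textbf{Lower bound and violation.} For the lower bound, take the basis from Lemma~\ref{lem:opt} that has one complete pair and its empty partner pair in the same component. Its moment point $(\one_B,\one_B\one_B^\top)$ lies in $\cR(M)$ by Proposition~\ref{prop:lift}. It has exactly one complete pair, so it satisfies \eqref{eq:familycut} with equality. Hence the strengthened optimum is at least $3m+1$. For the violation, the witness \eqref{eq:witness} gives $C_{a_jb_j}=+\frac14$, since both elements of a pair lie on the same side of a $q_i$. So $Y_{a_jb_j}=\frac14+\frac14=\frac12$, the left side of \eqref{eq:familycut} is $2m\cdot\frac12=m$, and the violation is $m-1$.

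\textbf{Upper bound: reduction to per-component inequalities.} This is the main work: show that every feasible point of \eqref{eq:R} together with \eqref{eq:familycut} satisfies $U\le 3m+1$. I would reduce it to one inequality per component,
\[
 U_i:=\sum_{uv\in H,\,u,v\in V_i}(x_u+x_v-Y_{uv})\;\le\;3+Y_{P_{2i-1}}+Y_{P_{2i}}+(\text{a term }\varepsilon_i\text{ with }\textstyle\sum_i\varepsilon_i\le0).
\]
Summing over $i$ and applying \eqref{eq:familycut} then gives $U\le 3m+1$. The integral heuristic supports this shape. Write $s,t$ for the numbers of selected elements in the two pairs of $V_i$ and $u_i=s+t$. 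Then the coverage is $2s+2t-st$, the number of complete pairs is $\binom s2+\binom t2$, and
\[
 3+\tbinom s2+\tbinom t2-(2s+2t-st)=\tfrac12(u_i-2)(u_i-3).
\]
In the lift this translates to $3+Y_{P_{2i-1}}+Y_{P_{2i}}-U_i=\frac12\,\Lambda\big[(u_i-2)(u_i-3)\big]$, where $\Lambda$ is the linear functional defined by $Z$ on polynomials of degree at most two, using $Y_{vv}=x_v$. So it suffices to show $\sum_i\Lambda[(u_i-2)(u_i-3)]\ge0$.

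\textbf{Ingredients for the certificate.} Per component this cannot hold: the witness shows that the local constraints allow $\Lambda[u_i]=2$ and $\Lambda[u_i^2]$ as small as $4+$(something), with the cut being the only global obstruction. So the certificate must use global information. I would use three facts.
\begin{enumerate}
\item The base equation gives $\sum_i u_i\equiv 2m$ in the lift. Indeed $Y\one=rx$ and $x(V)=r$, as in Proposition~\ref{prop:lift}. Setting $\delta_i=u_i-2$, this yields $\Lambda[\delta_i]$ summing to $0$ and $\Lambda[(\sum_i\delta_i)\ell]=0$ for every linear $\ell$.
\item The rank bounds $\rho(V_i)=3$ and $\rho(V\setminus V_i)=2m-1$, from Lemma~\ref{lem:family-ind}, applied inside the conditional constraints $Y_{\cdot v}\in x_vB(M)$ and $x-Y_{\cdot v}\in(1-x_v)B(M)$. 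These give localizing inequalities $\Lambda[z_v(3-u_i)]\ge0$, $\Lambda[(1-z_v)(3-u_i)]\ge0$, $\Lambda[z_v(u_i-1)]\ge0$, and $\Lambda[(1-z_v)(u_i-1)]\ge0$.
\item Positive semidefiniteness of $Z$, which gives $\Lambda[\ell^2]\ge0$ for linear $\ell$.
\end{enumerate}
Since $(u_i-2)(u_i-3)=\delta_i^2-\delta_i$ and $\sum_i\Lambda[\delta_i]=0$, the target becomes $\sum_i\Lambda[\delta_i^2]\ge0$. That follows from fact 3 alone, which looks suspiciously easy. So I would first recheck the identity expressing $\Lambda[\binom{s}{2}]$ through $Y_{P}$ and $\Lambda[st]$ through the cross entries. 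In particular, I would confirm that the diagonal substitution $\Lambda[z_v^2]=x_v$ is used consistently, so that $\Lambda[s^2]=\Lambda[s]+2Y_P$ and hence $\Lambda[\binom s2]=Y_P$.

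\textbf{Main obstacle.} If that bookkeeping leaves a residual linear term, I would supplement the sum of squares with the localizing inequalities from fact 2 to absorb it. The bookkeeping needs care because the cut enters with the correct sign only after summing over all components. Once these multipliers are fixed, the remaining verification is a finite linear identity in the entries of $Y$ and $x$.
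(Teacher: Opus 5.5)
Your proposal is correct and is essentially the paper's proof: your sum of squares $\tfrac12\sum_i\Lambda[(u_i-2)^2]$ is exactly the paper's $\tfrac12\sum_i h_i^\top Z h_i$ with $h_i=(-2,\one_{V_i})$, and your resulting identity $3m+1-U=1-\sum_jY_{a_jb_j}+\tfrac12\sum_i\Lambda[\delta_i^2]$ is the boxed certificate \eqref{eq:dualcertificate}. The bookkeeping you wanted to recheck is right ($\Lambda[\tbinom{s}{2}]=Y_P$ and $\Lambda[st]$ is the sum of cross entries, using $Y_{vv}=x_v$), so no residual linear term appears, the localizing inequalities of your fact 2 are unnecessary, and only $x(V)=2m$ and $Z\succeq0$ are used.
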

\begin{proof}
The union of any two complete pairs is dependent by Lemma~\ref{lem:family-ind}. Hence Proposition~\ref{prop:cut} applies. In the witness, $Y_{a_jb_j}=1/2$ for every $j$. The left side of \eqref{eq:familycut} is $m$.

We give an explicit positive semidefinite certificate for the upper bound. Let $Z$ be the augmented moment matrix. For each coverage component $V_i$, define the fixed vector
\[
 h_i=\begin{pmatrix}-2\\\one_{V_i}\end{pmatrix}\in\R^{1+4m}.
\]
Every point satisfying the affine moment identities and $x(V)=2m$ obeys
\begin{equation}\label{eq:dualcertificate}
 \boxed{\quad
 3m+1-U(x,Y)
 =1-\sum_{j=1}^{2m}Y_{a_jb_j}
   +\frac12\sum_{i=1}^{m}h_i^\top Zh_i.
 \quad}
\end{equation}

To verify the identity, set
\[
 s_i=x(V_i),\quad
 c_i=\sum_{\substack{u\in P_{2i-1}\\v\in P_{2i}}}Y_{uv},\quad
 d_i=Y_{a_{2i-1}b_{2i-1}}+Y_{a_{2i}b_{2i}}.
\]
Then $U=\sum_i(2s_i-c_i)$ and $\sum_i s_i=2m$. Since $\diag(Y)=x$,
\[
 h_i^\top Zh_i=4-3s_i+2(c_i+d_i).
\]
Substituting this expression into the right side of \eqref{eq:dualcertificate} gives
$1-m+\sum_i c_i$, which is also $3m+1-U$.

The cut makes the first term on the right of \eqref{eq:dualcertificate} nonnegative. Positive semidefiniteness makes every remaining term nonnegative. Hence $U\le3m+1$. An optimal integral basis gives feasible moments for the strengthened system and attains this value. This proves exactness.
\end{proof}

\begin{remark}[Equivalent covariance certificate]
Since $h_i^\top Zh_i=(s_i-2)^2+\one_{V_i}^\top C\one_{V_i}$, identity~\eqref{eq:dualcertificate} is also
\[
 3m+1-U
 =1-\sum_jY_{a_jb_j}
 +\frac12\sum_i(s_i-2)^2
 +\frac12\sum_i\one_{V_i}^\top C\one_{V_i}.
\]
The version using $Z$ is linear in the lifted variables and supplies an explicit semidefinite dual upper-bound certificate. It does not rely on numerical optimization.
\end{remark}

\section{Exact input, numerical error, and reproducibility}\label{sec:implementation}
We distinguish exact feasible input, numerical approximation, and finite verification. Neither a numerical rank estimate nor a successful regression test substitutes for the theorem’s hypotheses.

\subsection{Exact input and certification}
Theorem~\ref{thm:main} rounds a supplied exact rational feasible point. It assumes neither that floating-point solver output is exactly feasible nor that an optimum has a polynomial-length rational representation.

A rational pair can be checked using the stated constraints. Positive semidefiniteness can be tested with exact rational elimination and Schur complements. Membership in the base polytope and in its scaled copies has polynomial-time oracle algorithms~\cite{Cunningham84,GLS93}. A zero scaling must be checked as equality to the zero vector, not by division.

For a direct certificate, supply a rational factorization $C=LDL^\top$ with $D$ here a nonnegative diagonal factor, not the matrix in \eqref{eq:AD}. Supply convex decompositions into bases for $x$ and the nontrivial conditional points. Exact matrix multiplication and independence queries then verify feasibility. The example family has such certificates in explicit form.

The covariance rank is discontinuous under perturbation. A numerical singular-value threshold cannot certify the exact integer $d$ in the theorem. In particular, small omitted eigenvalues do not justify substituting a smaller rank.

\subsection{Feasible numerical approximations}
Appendix~\ref{app:approximation} bounds the loss when rational candidate approximations are certified to lie in $B(M)$. It does not repair arbitrary infeasible solver output.

\subsection{Finite verification and its scope}
Appendix~\ref{app:finite-tests} records the tested witnesses, conditional constraints, matroid representations, rounding identities, populations, and boundary cases. These exact regression checks are not independent proof verification. The code rounds supplied decompositions; it is not a general decomposition oracle or semidefinite optimizer.

\section{Conclusion}
The full conditional positive semidefinite lift has a sharp limiting worst-case coverage ratio of $3/4$. A single-coordinate conditioning step gives a sharp refinement by covariance rank. The algorithm needs only exact evaluation of conditional candidates and one deterministic matroid rounding call.

For $m>1$, the gap witness does not arise from a distribution over bases, although each of its one-coordinate conditional marginal vectors is a convex combination of bases. The missing information concerns simultaneous selection of complete pairs whose union is dependent. One valid pair-exclusion inequality removes the witness and makes the example family exact.

The paper does not prove a better universal approximation ratio for arbitrary matroid-constrained edge coverage. It also does not prove efficient separation of all pair-exclusion inequalities, an approximation ratio for the strengthened lift on arbitrary instances, or an efficient method to obtain low-rank optimal moments.

\noindent\textbf{Declaration of Generative AI Use.} ChatGPT (OpenAI) was used to assist with mathematical analysis and manuscript preparation. The author takes responsibility for the final content.

\label{rev:main-end}
\appendix
\section{Matroid facts used by the rounding algorithm}\label{app:matroids}
This appendix supports the rounding guarantee in Section~\ref{sec:rounding}. We use the rank function $\rho$ of the original matroid from Section~\ref{sec:definitions} and prove the exchange and polytope facts needed by Appendix~\ref{app:constructive-rounding}. The constructive rational-polytope algorithm itself remains the cited prerequisite in Lemma~\ref{lem:decomp}.

Rank submodularity controls closure, which is the tool used below to find a symmetric exchange.

\begin{lemma}[Rank submodularity]\label{lem:ranksub}
For subsets $A,B\subseteq V$,
\[
 \rho(A)+\rho(B)\ge\rho(A\cup B)+\rho(A\cap B).
\]
\end{lemma}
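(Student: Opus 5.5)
Rank submodularity is the one matroid fact not yet established in the paper, and I will derive it from the augmentation axiom by a basis-extension argument. The approach is to take a maximum independent subset of the intersection, extend it to a maximum independent subset of the union, and count the elements of that union-basis lying in $A$ and in $B$.

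First I will fix $I\subseteq A\cap B$ independent with $|I|=\rho(A\cap B)$. Then I will extend $I$ to an independent $J\subseteq A\cup B$ with $|J|=\rho(A\cup B)$ and $I\subseteq J$. This extension step needs justification. Take any maximum independent $J_0\subseteq A\cup B$. While $|I'|<|J_0|$, repeatedly apply augmentation to the current independent set $I'\supseteq I$, adding an element of $J_0\setminus I'\subseteq A\cup B$. This yields $J\supseteq I$ of the maximum size, and it uses only the augmentation and hereditary axioms.

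Next I will bound $|J|$ using the two pieces $J\cap A$ and $J\cap B$. Both are independent by heredity, so $|J\cap A|\le\rho(A)$ and $|J\cap B|\le\rho(B)$. The key claim is $J\cap A\cap B=I$. Certainly $I\subseteq J\cap(A\cap B)$. Conversely, $J\cap(A\cap B)$ is an independent subset of $A\cap B$ containing $I$, so its size is at most $\rho(A\cap B)=|I|$, which forces equality. Inclusion--exclusion then gives
\[
\rho(A\cup B)=|J|=|J\cap A|+|J\cap B|-|J\cap A\cap B|\le\rho(A)+\rho(B)-\rho(A\cap B).
\]
Here I use $J\subseteq A\cup B$ to write $J=(J\cap A)\cup(J\cap B)$. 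Rearranging gives the inequality.

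The only point needing care is the extension step, namely that $J$ attains $\rho(A\cup B)$ while still containing $I$. If it only attained a maximal size under inclusion, the count would fail, so I will state explicitly that augmentation makes every maximal independent subset of a set maximum. The identification $J\cap A\cap B=I$ then follows from the maximality of $I$. The rest is routine counting.
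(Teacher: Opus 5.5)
Your proof is correct, but it organizes the extension differently from the paper.

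You extend a maximum independent $I\subseteq A\cap B$ in one step to a maximum independent $J\subseteq A\cup B$. You then use the maximality of $I$ to get $J\cap A\cap B=I$, and finish by inclusion--exclusion with the two rank bounds $|J\cap A|\le\rho(A)$ and $|J\cap B|\le\rho(B)$.

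The paper instead builds a chain $I\subseteq J\subseteq K$, with $J$ a basis of $A$ and $K$ a basis of $A\cup B$. It argues that every element added in the second stage lies in $B\setminus A$, because no element of $A\setminus J$ augments $J$ and dependence persists in supersets. Hence $I\cup(K\setminus J)$ is an independent subset of $B$ of size $\rho(A\cap B)+\rho(A\cup B)-\rho(A)$.

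Your route is symmetric in $A$ and $B$. It never has to control where the augmenting elements come from; the only structural input is the maximality of $I$ inside $A\cap B$. Your explicit construction of $J$ by repeated augmentation from a maximum $J_0$ also spells out the maximal-versus-maximum point. The paper leaves this implicit when it identifies $|J|$ with $\rho(A)$ and $|K|$ with $\rho(A\cup B)$.

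The paper's route, in exchange, exhibits a concrete independent subset of $B$ witnessing the inequality. The $A$-term enters exactly rather than through a second rank bound.
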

\begin{proof}
Choose a basis $I$ of $A\cap B$. Extend it to a basis $J$ of $A$, then to a basis $K$ of $A\cup B$. All elements added in the second extension can be taken from $B\setminus A$. No element of $A\setminus J$ can augment $J$, and dependence remains after adding more elements. The set $I\cup(K\setminus J)$ is independent and lies in $B$. Therefore,
\[
 \rho(A\cap B)+\rho(A\cup B)-\rho(A)
 =|I|+|K\setminus J|\le\rho(B).
\]
\end{proof}

Define the closure of $A$ by
$\cl(A)=\{v:\rho(A+v)=\rho(A)\}$.
Rank submodularity implies the following diminishing-increment fact: if $A\subseteq B$ and $v\in\cl(A)$, then $v\in\cl(B)$. It also implies that $\rho(\cl(A))=\rho(A)$, by adding closure elements one at a time. Consequently, $\cl(\cl(A))=\cl(A)$.

\begin{proof}[Rank-function proof of Lemma~\ref{lem:exchange}]
Take bases $B_1,B_2$ and $i\in B_1\setminus B_2$. We seek $j\in B_2\setminus B_1$ for which both $B_1-i+j$ and $B_2-j+i$ are bases. Let $H=\cl(B_1-i)$. Then $\rho(H)=r-1$ and $i\notin H$. Choose an inclusion-minimal set $K\subseteq B_2$ that spans $i$, meaning $i\in\cl(K)$. It exists because $B_2$ is a basis. It is nonempty, because $i$ belongs to the basis $B_1$ and is not a loop.

The set $K$ is not contained in $H$. Otherwise the closure properties would imply $i\in\cl(H)=H$. Choose $j\in K\setminus H$. Since $B_1-i\subseteq H$, this element lies in $B_2\setminus B_1$. It augments $B_1-i$, so $B_1-i+j$ is a basis.

We show that $B_2-j+i$ is also a basis. Suppose not. Its rank would be $r-1$, because it contains the independent set $B_2-j$. Since $K$ minimally spans $i$, the set $(K-j)+i$ is independent and has size $|K|$. Apply rank submodularity to $K+i$ and $(B_2-j)+i$. Their union is $B_2+i$, and their intersection is $(K-j)+i$. This gives
\[
 |K|+(r-1)\ge r+|K|,
\]
which is impossible. Thus both exchanges are feasible.
\end{proof}

The decomposition algorithm needs a rational description with bounded coefficients. The rank inequalities supply that description; their validity as the full base polytope follows by comparing all linear objectives with greedy.

\begin{lemma}[Base-polytope rank description]\label{lem:basepoly}
The base polytope satisfies
\[
 B(M)=\{z\in\R^V:z\ge0,\ z(V)=r,\ z(A)\le\rho(A)\text{ for all }A\subseteq V\}.
\]
\end{lemma}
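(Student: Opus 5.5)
The plan is to prove the two inclusions separately. Write $P$ for the right-hand side. The inclusion $B(M)\subseteq P$ is immediate. $P$ is convex, so it suffices to check basis indicators. For a basis $B$, $\one_B\ge0$ and $\one_B(V)=|B|=r$. Moreover, $\one_B(A)=|B\cap A|\le\rho(A)$, because $B\cap A$ is an independent subset of $A$.

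For the reverse inclusion $P\subseteq B(M)$, I will use the hint in the surrounding text and compare linear objectives with greedy. Both sets are compact and convex: $P$ is bounded since $0\le z_v\le\rho(\{v\})\le1$. By the separating-hyperplane theorem it therefore suffices to show that for every weight vector $c\in\R^V$,
\[
 \max_{z\in P}c^\top z\le\max_{B\in\cB}c(B).
\]

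Fix $c$ and order the elements as $v_1,\dots,v_n$ with $c_{v_1}\ge\cdots\ge c_{v_n}$. Put $A_k=\{v_1,\dots,v_k\}$, with $A_0=\varnothing$ and $c_{v_{n+1}}:=0$. Run greedy: scan the elements in this order, adding $v_k$ whenever independence is preserved. The standard augmentation argument shows that the greedy set $G$ satisfies $|G\cap A_k|=\rho(A_k)$ for every $k$. Indeed, $G\cap A_k$ is a maximal independent subset of $A_k$, and all maximal independent subsets of a set have size $\rho$ by the exchange axiom. In particular $G$ is a basis, since $|G|=\rho(V)=r$.

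Then, for any $z\in P$, Abel summation gives
\[
 c^\top z=\sum_{k=1}^{n-1}(c_{v_k}-c_{v_{k+1}})\,z(A_k)+c_{v_n}\,z(V).
\]
The difference coefficients are nonnegative, so $z(A_k)\le\rho(A_k)$ bounds each of those terms. The last coefficient $c_{v_n}$ may be negative, which is why the constraint $z(V)=r$ is needed as an equality: it fixes that term exactly. Hence
\[
 c^\top z\le\sum_{k=1}^{n-1}(c_{v_k}-c_{v_{k+1}})\rho(A_k)+c_{v_n}r=c(G).
\]
The final equality is the same Abel summation applied to $\one_G$, using $|G\cap A_k|=\rho(A_k)$ and $|G|=r$. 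So no point of $P$ can lie outside $B(M)$, since any separating $c$ would violate this bound.

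The main care point is the greedy property $|G\cap A_k|=\rho(A_k)$. Suppose some $k$ had $|G\cap A_k|<\rho(A_k)$. By augmentation, some $v_j\in A_k\setminus G$ would augment $G\cap A_k$, hence also $G\cap A_{j-1}$ by the subset axiom. But greedy examined $v_j$ at step $j$ and rejected it, which means $(G\cap A_{j-1})+v_j$ is dependent — a contradiction.
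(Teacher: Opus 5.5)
Your proof is correct and follows the paper's route: bound $c^\top z$ by Abel summation over the prefixes $A_k$, match the bound with the greedy basis via $|G\cap A_k|=\rho(A_k)$, and conclude with the separating-hyperplane theorem. The only difference is cosmetic. You handle a possibly negative last coefficient by using the equality $z(V)=r$ directly, whereas the paper first shifts all coefficients to be nonnegative; the two devices are equivalent.
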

\begin{proof}
Every basis indicator satisfies these constraints. Let $P$ denote the right side. It is bounded, since the singleton inequalities give $0\le z_v\le1$.

Take any linear objective $c$. Add the same constant to every coefficient so that the coefficients are nonnegative. This does not change maximizers on $P$ or $B(M)$, because all points have coordinate sum $r$. Order the vertices so that $c_1\ge\cdots\ge c_n\ge0$, and set $c_{n+1}=0$. Let $A_i=\{1,\ldots,i\}$. For $z\in P$,
\[
 c^\top z=\sum_{i=1}^{n}(c_i-c_{i+1})z(A_i)
 \le\sum_{i=1}^{n}(c_i-c_{i+1})\rho(A_i).
\]

Greedy produces a basis $B$ with $|B\cap A_i|=\rho(A_i)$ for every prefix. To see this, after scanning a prefix its accepted set is maximal independent in that prefix. All maximal independent subsets of a matroid restriction have its rank. Hence the last upper bound equals $c^\top\one_B$.

Every linear objective therefore has the same maximum on $P$ and $B(M)$. The separating-hyperplane theorem for compact convex sets gives $P=B(M)$.
\end{proof}

\section{Constructive rounding and rational complexity}\label{app:constructive-rounding}
This appendix proves Lemma~\ref{lem:round} from Section~\ref{sec:rounding}. Given $z\in B(M)$, we obtain an explicit convex combination of bases and merge its terms without decreasing $F$. The rank and exchange facts used here were established in Appendix~\ref{app:matroids}. Input length includes every rational numerator and denominator.

A finite convex decomposition is the input to the merging procedure. The following standard oracle result supplies one with polynomial encoding length.

\begin{lemma}[Convex decomposition of a base-polytope point]\label{lem:decomp}
Given an independence oracle for $M$ and $z\in B(M)\cap\Q^V$, one can obtain
\[
 z=\sum_{j=1}^{t}\lambda_j\one_{B_j},\qquad
 \lambda_j>0,\quad \sum_j\lambda_j=1,\quad B_j\in\cB,
 \quad t\le n+1,
\]
with rational coefficients in polynomial time. The coefficient encodings have polynomial length.
\end{lemma}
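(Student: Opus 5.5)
The plan is a Carathéodory-style peeling algorithm driven by the greedy separation of Lemma~\ref{lem:basepoly}, with the complexity bounds coming from vertex and face structure of $B(M)$. We maintain a current rational point $y$ (initially $z$), a residual mass $\mu$ (initially $1$), and a current minimal face $\Phi$ of $B(M)$ containing $y$. This face is described by the tight sets $A$ with $y(A)=\rho(A)$ and the zero coordinates.

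\textbf{Step 1: find a vertex on the current face.} We find a basis $B$ lying on the minimal face containing $y$. To do so, we choose a chain of tight sets $\varnothing\subset A_1\subset\cdots\subset A_p=V$ spanning the tight structure at $y$. We then run greedy in an order compatible with the chain, placing elements with $y_v=0$ last within each block. By the prefix argument in the proof of Lemma~\ref{lem:basepoly}, the resulting basis satisfies $|B\cap A_i|=\rho(A_i)$, so it is tight wherever $y$ is.

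Finding tight sets requires minimizing $\rho(A)-y(A)$, which is submodular. We will invoke Cunningham's algorithm~\cite{Cunningham84} (or \cite{GLS93}) for membership in $B(M)$. That algorithm returns a minimizer, and it also supports the uncrossing needed to make the tight family a chain.

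\textbf{Step 2: peel off the vertex.} We move from $\one_B$ through $y$ to the boundary. Set
\[
\lambda^\ast=\max\{\lambda\le1:\ y-\lambda\one_B\in(1-\lambda)B(M)\}.
\]
In practice this is a line search. It is bounded by the coordinates that hit zero, which gives the ratio $\min_{v\in B}(1-y_v)$-type quantity, and by the rank constraints that become tight. The latter is computed as a parametric submodular minimization, or more simply by Newton/bisection over breakpoints. Since all breakpoints are ratios of small integers and coordinates of $y$, the result stays rational.

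The new residual point lies on a face of strictly smaller dimension. This holds because either a new coordinate becomes zero or a new rank set becomes tight, and $\one_B$ is not in the span of the previous tight constraints. Hence after at most $n$ steps, since $\dim B(M)\le n-1$, we reach a vertex. This yields $t\le n+1$ terms with $\lambda_j>0$ summing to one.

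\textbf{Step 3: encoding length.} Each intermediate $y$ satisfies a linear system consisting of the current tight rank equalities with $0/1$ coefficients and integer right sides at most $n$, together with the representation $z=\sum\lambda_j\one_{B_j}+\mu y$. The coefficients $\lambda_j$ solve a linear system $\sum_j\lambda_j\one_{B_j}=z$ with $0/1$ matrix and right side $z$. By Cramer's rule, their encoding length is polynomial in $n$ and the encoding of $z$. We can therefore recompute the $\lambda_j$ at the end by exact Gaussian elimination rather than tracking them through iterations, which avoids bit growth.

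\textbf{Main obstacle.} The main obstacle is Step 2's exact line search on the rank constraints, that is, computing
\[
\lambda^\ast=\min_A\frac{\rho(A)-y(A)}{|B\cap A|-y(A)}
\]
over sets $A$ with positive denominator. I would first try Dinkelbach/Newton iteration using submodular minimization as a subroutine; each Newton step strictly decreases the candidate ratio and terminates in polynomially many steps. Failing that, I would simply cite the standard result~\cite{GLS93,Cunningham84} that a rational point of a polytope with a polynomial-time separation oracle admits a polynomial-time Carathéodory decomposition, since the statement itself is framed as the cited prerequisite.
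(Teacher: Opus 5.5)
Your fallback, citing the constructive Carath\'eodory theorem, is exactly the paper's proof. There, greedy serves as a strong optimization oracle for $B(M)$ (after a common shift of the objective). The rank inequalities of Lemma~\ref{lem:basepoly} have $0/1$ coefficients and integer right sides at most $n$, so $B(M)$ is a well-described rational polytope. Then \cite[Chapter~6]{GLS93} returns at most $n+1$ affinely independent vertices with rational weights of polynomial length. That theorem needs this facet-complexity hypothesis, not merely a separation oracle.

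Your hand-built peeling is a genuinely different and more explicit route, whose only heavy subroutine is submodular minimization. However, the step you call the main obstacle is set up incorrectly. Unwinding $y-\lambda\one_B\in(1-\lambda)B(M)$ gives $\lambda\le y_v$ for $v\in B$, and $\lambda(\rho(A)-|B\cap A|)\le\rho(A)-y(A)$ for every $A$. Hence
\[
 \lambda^\ast=\min\Bigl\{1,\ \min_{v\in B}y_v,\ \min_{A:\ \rho(A)>|B\cap A|}\frac{\rho(A)-y(A)}{\rho(A)-|B\cap A|}\Bigr\}.
\]
Your denominator $|B\cap A|-y(A)$ is positive exactly on sets whose constraint is slack along the whole segment, where your ratio is at least $1$. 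It discards the binding sets, those with $y(A)>|B\cap A|$. Your coordinate bound $1-y_v$ over $v\in B$ is also wrong; it should be $y_v$. For example, take the rank-one uniform matroid on $\{1,2\}$, $y=(1/3,2/3)$ and $B=\{1\}$. Your recipe gives $\lambda^\ast=2/3$ and residual $(-1,2)\notin B(M)$, whereas the correct value is $1/3$ with residual $(0,1)$. With $B=\{2\}$ it instead stops at the interior point $(1/2,1/2)$, so no dimension is lost.

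With the corrected ratio, your Dinkelbach idea works, and termination can be proved rather than asserted. For $0\le\lambda\le1$ the function $(1-\lambda)\rho(A)-y(A)+\lambda|B\cap A|$ is submodular. The denominators $\rho(A_k)-|B\cap A_k|$ of successive non-terminal Newton minimizers strictly decrease through integers in $\{1,\ldots,r\}$, so $O(r)$ submodular minimizations suffice per peel.

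Two further points must still be supplied for your route. First, Step~1 needs an actual procedure for a maximal chain in the lattice of tight sets. One option is to compute, for each element, the minimal minimizer of $\rho-y$ containing it, and then take a linear extension of the induced preorder. Pointing to uncrossing inside Cunningham's algorithm is not yet an argument. Second, your Cramer's-rule bound needs the linear system to have a unique solution. This holds because each $\one_{B_j}$ lies outside the affine hull of the next minimal face. Otherwise $y_j$, which lies between $\one_{B_j}$ and $y_{j+1}$, would lie in that proper face, contradicting minimality. So the peeled vertices are affinely independent. The same face argument, not a span condition on $\one_B$, is what guarantees the dimension drop.
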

\begin{proof}
Linear optimization over bases is implemented by greedy, as shown in the proof of Lemma~\ref{lem:basepoly}. The argument permits negative coefficients: all bases have size $r$, so a common shift makes the coefficients nonnegative without changing their maximizers.

Apply the standard optimization--separation equivalence and constructive Carath\'eodory theorem for well-described rational polytopes~\cite[Chapter 6]{GLS93}. Each rank inequality has zero-one coefficients and an integer right side at most $n$, so its encoding length is polynomial. The number of inequalities can be exponential; the argument uses a well-described rational-polytope oracle representation, not a polynomial-size explicit inequality list. Its vertices are zero-one vectors; an explicit rank-inequality description is proved in Appendix~\ref{app:matroids}. The constructive theorem returns at most $n+1$ vertices and rational convex coefficients of polynomial encoding length. This use of general rational-polytope algorithms is the algorithmic prerequisite; it is not an enumeration of all bases.
\end{proof}

To merge two bases, we need an exchange that is feasible in both directions. Symmetric exchange provides the two endpoints whose objective values the algorithm compares.

\begin{lemma}[Symmetric basis exchange]\label{lem:exchange}
Let $B_1,B_2$ be bases and $i\in B_1\setminus B_2$. Some $j\in B_2\setminus B_1$ satisfies
\[
 B_1-i+j\in\cB,\qquad B_2-j+i\in\cB.
\]
Such an element can be found using at most $2r$ independence queries.
\end{lemma}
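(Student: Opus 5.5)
The existence part has already been proved in Appendix~\ref{app:matroids} (the rank-function proof of Lemma~\ref{lem:exchange}). That argument is constructive, so the plan is to turn it into a search and count the queries. I would not search for the inclusion-minimal spanning set $K$ directly. Instead I would use the standard characterization through fundamental circuits and cocircuits.

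Consider $B_2+i$. It contains a unique circuit $C_2$, the fundamental circuit of $i$ with respect to $B_2$. The set $B_2-j+i$ is a basis exactly when $j\in C_2$. Next consider $B_1-i$. Its closure $H=\cl(B_1-i)$ is a hyperplane, and the set $B_1-i+j$ is a basis exactly when $j\notin H$, that is, when $(B_1-i)+j$ is independent. The rank-function proof shows that some $j\in B_2\setminus B_1$ satisfies both conditions; the minimal spanning set $K$ there is $C_2-i$, and the chosen $j\in K\setminus H$ works.

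The algorithm follows directly from this. For each $j\in B_2\setminus B_1$, make two independence queries. The first asks whether $B_1-i+j$ is independent. Since this set has size $r$, independence is the same as being a basis. The second asks whether $B_2-j+i$ is independent, which is equivalent to $j\in C_2$. Return the first $j$ for which both answers are yes. There are at most $|B_2\setminus B_1|\le r$ candidates and two queries per candidate, so at most $2r$ queries are used. Existence from Appendix~\ref{app:matroids} guarantees that the scan succeeds. It is worth noting explicitly that any $j$ passing both tests has the required property, so correctness does not depend on the particular $j$ constructed in the appendix proof.

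The main point requiring care is the equivalence ``independent of size $r$'' $\Leftrightarrow$ basis, since all bases have size $r$. It also matters that the scan tests the two conditions jointly, rather than first fixing $j\in K\setminus H$ through a minimality search that would cost more queries. Once the two tests are joint, the query bound is just the count above.
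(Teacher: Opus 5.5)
Your proposal is correct and essentially matches the paper's proof. Both take existence from the rank-function argument in Appendix~\ref{app:matroids} and obtain the bound by testing each of the at most $r$ elements of $B_2\setminus B_1$ with two independence queries on sets of size $r$. Your fundamental-circuit remarks are accurate, including the identification $K=C_2-i$ and the two membership characterizations, but the query count does not need them.
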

\begin{proof}
This is the symmetric basis-exchange property. A rank-function proof appears in Appendix~\ref{app:matroids}. Test the at most $r$ elements of $B_2\setminus B_1$. Each test uses two independence queries on sets of size $r$.
\end{proof}

The two prerequisites now give the full proof of the main-text rounding guarantee.
\begin{proof}[Proof of Lemma~\ref{lem:round}]
Take the decomposition in Lemma~\ref{lem:decomp}. Keep its weights fixed while changing bases. Choose two terms $\lambda\one_{B_1}$ and $\mu\one_{B_2}$ with $\lambda,\mu>0$. If their bases differ, apply Lemma~\ref{lem:exchange} to obtain $i,j$. Replacing only $B_1$ gives the point
\[
 z^-=z-\lambda e_i+\lambda e_j.
\]
Replacing only $B_2$ gives
\[
 z^+=z+\mu e_i-\mu e_j.
\]
Both points remain convex combinations of bases. Also,
\[
 z=\frac{\mu}{\lambda+\mu}z^-
    +\frac{\lambda}{\lambda+\mu}z^+.
\]

Along a direction $e_i-e_j$, the polynomial $F$ is convex. Its quadratic coefficient is $w_{ij}\ge0$, with $w_{ij}=0$ if $ij\notin H$. Thus
\[
 F(z)\le\max\{F(z^-),F(z^+)\}.
\]
Evaluate both values exactly and keep an endpoint with the larger value.

Each exchange reduces $|B_1\setminus B_2|$ by one. After at most $r$ exchanges, the bases coincide. Replace the two terms by their common basis with weight $\lambda+\mu$. Continue until one term remains. At every step, the fractional objective does not decrease. The final vector is a basis indicator, so its objective is $f(B)$.

There are at most $(t-1)r$ exchanges. Each uses at most $2r$ independence queries and two evaluations of the explicit quadratic objective. Rational weights are only added. All intermediate coordinates are sums of the original rational weights. Their encoding length stays polynomial. This proves the computational claim.
\end{proof}

This is an objective-aware deterministic version of standard exchange rounding~\cite{CCPV11,CVZ09}. The proof uses the explicit form of $F$. No value-oracle sampling is required.

\section{Feasible candidate approximations}\label{app:approximation}
This appendix supports Section~\ref{sec:implementation}. The original lifted point remains exact; only its conditional candidate vectors are approximated, and the supplied approximations must remain in $B(M)$. We first control the objective loss, then propagate it through candidate selection and rounding.

The derivative of each edge contribution bounds its sensitivity to a coordinate perturbation. Summing those sensitivities yields a uniform error bound in terms of total edge weight $W$.

\begin{lemma}[Lipschitz estimate]\label{lem:lipschitz}
For $z,z'\in[0,1]^V$,
\[
 |F(z)-F(z')|
 \le\sum_v\deg_w(v)|z_v-z'_v|
 \le2W\|z-z'\|_\infty,
\]
where $\deg_w(v)=\sum_{u:uv\in H}w_{uv}$.
\end{lemma}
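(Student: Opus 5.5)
The plan is to expand $F(z)-F(z')$ edge by edge and bound each edge's contribution with a first-order (mean-value) estimate. For a single edge $uv$, write $\phi(p,q)=p+q-pq$. The key algebraic identity is
\[
 \phi(p,q)-\phi(p',q')=(p-p')(1-q)+(q-q')(1-p').
\]
One checks it by expanding: the right side equals $p-pq-p'+p'q+q-q'-p'q+p'q'$, which simplifies to $p+q-pq-p'-q'+p'q'$. Both factors $1-q$ and $1-p'$ lie in $[0,1]$ because all coordinates are in $[0,1]$. Hence
\[
 |\phi(z_u,z_v)-\phi(z'_u,z'_v)|\le |z_u-z'_u|+|z_v-z'_v|.
\]

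Next I multiply this bound by $w_{uv}\ge0$ and sum over $H$, using the triangle inequality on $F(z)-F(z')=\sum_{uv}w_{uv}\bigl(\phi(z_u,z_v)-\phi(z'_u,z'_v)\bigr)$. I then regroup the right-hand side by vertex. Each term $|z_v-z'_v|$ collects the weights of the edges incident to $v$, which is exactly $\deg_w(v)$. This gives the first inequality.

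For the second inequality, I bound every $|z_v-z'_v|$ by $\|z-z'\|_\infty$. It remains to use $\sum_v\deg_w(v)=2W$, since each edge is counted once at each of its two endpoints; the loopless hypothesis ensures the endpoints are distinct.

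The only real step is the edgewise identity together with the observation that the multiplying factors lie in $[0,1]$. I do not expect any genuine obstacle; the remaining steps are routine bookkeeping.
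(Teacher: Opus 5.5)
Your proof is correct and is essentially the paper's argument. The paper bounds $\partial F/\partial z_v=\sum_{u:uv\in H}w_{uv}(1-z_u)\in[0,\deg_w(v)]$ and integrates along the segment from $z$ to $z'$, while your edgewise identity is the exact coordinate-by-coordinate version of that same derivative bound, followed by the same regrouping by vertex and the same count $\sum_v\deg_w(v)=2W$.
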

\begin{proof}
The partial derivative is
$\partial F/\partial z_v=\sum_{u:uv\in H}w_{uv}(1-z_u)$.
It lies between zero and $\deg_w(v)$ throughout the cube. Integrate along the line segment from $z$ to $z'$. The final inequality uses $\sum_v\deg_w(v)=2W$.
\end{proof}

Apply the Lipschitz bound to the candidate that certifies the exact theorem. Selecting the best supplied approximation cannot be worse than selecting the approximation to that candidate. Thus one perturbation bound suffices.

\begin{proposition}[Certified approximation of candidate points]\label{prop:robust}
Let $(x,Y)$ be an exact real feasible point. Let $\mathcal Z$ be its candidates from Algorithm 1. Suppose that, for every $z\in\mathcal Z$, a rational point $\widehat z\in B(M)$ is supplied with
$\|z-\widehat z\|_\infty\le\eta$.
Choose the largest exact value $F(\widehat z)$ and round that point. If $d\ge1$, the output satisfies
\[
 f(S)\ge\left(\frac34+\frac{1}{4d}\right)U(x,Y)-2W\eta.
\]
The factor $3/4+\beta/4$ can replace the rank factor when $T>0$. For $d=0$, the corresponding factor is one.
\end{proposition}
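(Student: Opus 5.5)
The plan is to reduce to the exact theorem through a single comparison candidate. I will not analyze the perturbed maximizer directly.

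\textbf{Case $d\ge1$.} By Theorem~\ref{thm:effective}, or by Lemma~\ref{lem:scalar} when $T=0$, the exact candidate set $\mathcal Z$ contains a point $z^\circ$ with
\[
 F(z^\circ)\ge\left(\frac34+\frac{1}{4d}\right)U(x,Y).
\]
This point exists because the proof of Theorem~\ref{thm:effective} shows that the maximum of $F$ over $\mathcal Z$ is at least the average $Q$. When $T>0$, the same step gives the stronger lower bound $(3/4+\beta/4)U$. When $T=0$, we have $F(x)=U$, so $x$ itself serves as $z^\circ$ and the factor one dominates.

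\textbf{Perturbation step.} Let $\widehat z^\circ$ be the supplied approximation of $z^\circ$. All exact candidates and all supplied points lie in $B(M)\subseteq[0,1]^V$. Lemma~\ref{lem:lipschitz} therefore gives
\[
 F(\widehat z^\circ)\ge F(z^\circ)-2W\eta.
\]
The selected point $\widehat z^\star$ maximizes $F$ over the supplied approximations, so $F(\widehat z^\star)\ge F(\widehat z^\circ)$. The chosen point is rational and lies in $B(M)$, so Lemma~\ref{lem:round} returns a basis $S$ with $f(S)\ge F(\widehat z^\star)$. Chaining these inequalities gives the stated bound, with either the rank factor or the $\beta$ factor.

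\textbf{Case $d=0$.} Here $\mathcal Z=\{x\}$, and $F(x)=U$ because $Y=xx^\top$. The same chain then gives $f(S)\ge U-2W\eta$.

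The only delicate point is bookkeeping, not analysis. The exact bound must be attributed to one specific candidate of $\mathcal Z$, not merely to the rounded output of Algorithm 1. The Lipschitz estimate is then applied to that one candidate, and no others are needed.
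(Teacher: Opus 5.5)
Your proof is correct and follows the paper's argument: you attribute the exact factor to one specific candidate of $\mathcal Z$ (via the averaging in Theorem~\ref{thm:effective}, or via $F(x)=U$ when $T=0$ or $d=0$), apply Lemma~\ref{lem:lipschitz} to that candidate's supplied approximation, and then use maximality of the selected point together with Lemma~\ref{lem:round}. Your version also spells out bookkeeping that the paper's three-sentence proof leaves implicit, namely the cases $T=0$ and $d=0$ and the $[0,1]^V$ hypothesis of the Lipschitz lemma.
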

\begin{proof}
The exact candidate set contains a point of value at least the applicable factor times $U$. Its supplied feasible approximation loses at most $2W\eta$ by Lemma~\ref{lem:lipschitz}. Selecting the best approximate candidate and then rounding cannot decrease that lower bound.
\end{proof}

If the underlying feasible point also satisfies $U(x,Y)\ge U^\star-\varepsilon$, Proposition~\ref{prop:robust} gives
\[
 f(S)\ge\alpha\OPT-\alpha\varepsilon-2W\eta,
\]
where $\alpha$ is the applicable pointwise factor. The hypotheses require certified feasible points and a certified rank bound. This statement is not a feasibility-repair algorithm for arbitrary solver output.

\section{Finite verification and its scope}\label{app:finite-tests}
The tests check the witness, the matroid representation, and the rounding identities on stated finite inputs. Their counts describe regression coverage, not independent instances of a general proof.

The accompanying programs use integer and rational arithmetic. They need the Python standard library only. No numerical eigensolver or semidefinite solver is used.

\begin{table}[ht]
\centering
\small
\begin{tabular}{@{}rrrrrrr@{}}
\toprule
$m$ & $n$ & $r$ & $d$ & $\OPT$ & $U^\star$ & Rank-sized subsets \\
\midrule
1 & 4 & 2 & 1 & 4 & 4 & 6 \\
2 & 8 & 4 & 2 & 7 & 8 & 70 \\
3 & 12 & 6 & 3 & 10 & 12 & 924 \\
4 & 16 & 8 & 4 & 13 & 16 & 12,870 \\
5 & 20 & 10 & 5 & 16 & 20 & 184,756 \\
\bottomrule
\end{tabular}
\caption{Exact checks for the gap family. The value $U^\star$ is certified by the explicit feasible witness and the edgewise upper bound, not by an SDP solve.}
\label{tab:tests}
\end{table}

The family verifier examines 198,626 rank-sized subsets in total. It checks 120 conditional convex decompositions and the exact augmented Gram identity. For $m\le3$, it also checks all rank-sized subsets using determinants of the explicit representation.

A separate structure verifier computes column ranks for all 4,368 subsets across $m=1,2,3$. It checks both the independence characterization and \eqref{eq:rankformula}.

The rounding verifier passes 97 deterministic test cases. These include 18 exact effective-rank checks and 33 checks of \eqref{eq:dualcertificate}. 

The cases include the nonrealizable gap witnesses, rational mixtures with actual basis moments, uniform matroids, partition matroids, and represented matroids. Boundary cases include rank zero, integral points, loops, coloops, zero edge weights, and $T=0$ with nonzero covariance. The pseudorandom seed for the generated cases is 20261009.

These are regression checks. They are not a formal proof-assistant verification or independent peer review. The general proofs above do not depend on them. The code implements deterministic merging for a supplied decomposition. It does not implement a general base-polytope decomposition oracle or a general SDP optimizer.

\bibliographystyle{unsrt}
\bibliography{references}
\end{document}